\newdimen\bibspace
\numberwithin{equation}{section}
\newtheorem{theorem}{Theorem}[section]
\newtheorem{lemma}[theorem]{Lemma}
\newtheorem{proposition}[theorem]{Proposition}
\newtheorem{remark}[theorem]{Remark}
\def\<{\langle}
\def\>{\rangle}
\def\XXint#1#2#3{{\setbox0=\hbox{$#1{#2#3}{\int}$ }
\vcenter{\hbox{$#2#3$ }}\kern-.6\wd0}}
\begin{document}

\title{Entire solutions to the parabolic Monge--Amp\`ere equation\\ with unbounded nonlinear growth in time}

\author{Ning An, Jiguang Bao, Zixiao Liu\footnote{J. Bao was supported by the National Key Research and Development Program of China (No. 2020YFA0712904) and the Beijing Natural Science Foundation (No. 1222017). Z. Liu was supported by the China Postdoctoral Science Foundation (No. 2022M720327).}}
\date{\today}

\maketitle

\begin{abstract}
The Liouville type theorem on the parabolic Monge--Amp\`ere equation $-u_t\det D^2u=1$  states that any entire parabolically convex classical solution  must be of form $-t+|x|^2/2$ up to a re-scaling and transformation, under additional assumption that partial derivative with respect to time variable $u_t$ is strictly negative and bounded. In this paper, we study the case when $u_t$ is unbounded, prove an existence result of entire parabolically convex smooth solution and investigate the asymptotic behavior near infinity.
 \\[1mm]
 {\textbf{Keywords:}} Parabolic Monge--Amp\`ere equation,  Entire solutions,\\
\text{\hspace{4.7em} Existence}, Asymptotic behavior.
\\[1mm]
{\textbf{MSC (2020):}} 35A01, 35K96,  35B40.
\end{abstract}

\section{Introduction}

As a family of second order fully nonlinear elliptic equations, the Monge--Amp\`ere equation plays an important role in the Weyl and Minkowski problem \cite{Nirenberg-WeylnMinkovskiProb,Li-Sheng-Wang-MinkowskiProb-MA}, optimal transport problem \cite{Kantorovitch-OptimalTrans-MA,DePhilippis-Figalli-OptimalTrans-MA}, computational geometry \cite{Gu-Luo-Sun-Yau-ComputGeom-MA}, mean curvature equations of gradient graphs \cite{Chong-Rongli-Bao-SecondBoundary-SPL,Liu-Bao-2021-Expansion-LagMeanC}, etc. In 1976, Krylov \cite{Krylov-ParaboMA-Operator} first introduced the following type of parabolic Monge--Amp\`ere equations
\[
-u_t\cdot \det D^2u=f(x,t,u,Du)
\]
and established an Alexandroff--Bakelman--Pucci type maximum principle for parabolic equation. Hereinafter, $u_t$ denotes the derivative of $u$ with respect to time variable $t$, $Du$ denotes the gradient vector of $u$, $\det D^2u$ denotes the determinant of the Hessian matrix of $u$ and $f$ is a known function. These parabolic Monge--Amp\`ere equations also occur in the studies of Gauss curvature flows \cite{Chow-Tsai-GaussCurvFlow,Schnurer-Smoczyk-GaussCurvFlow}, Gauss--Kronecker curvature \cite{Tso-ParaboMA-Deforming}, stochastic theory \cite{Karatzas-ParboMA-StochasticThy}, etc.

The theorem by J\"orgens \cite{Jorgens} ($n=2$), Calabi \cite{Calabi} ($n\leq 5$)
and Pogorelov \cite{Pogorelov} ($n\geq 2$) states that any classical convex solution to the elliptic Monge--Amp\`ere equation
\[
\det D^2u=1\quad\text{in }\mathbb R^n
\]
must be a quadratic polynomial. Especially when $n=2$, the convexity assumption is not necessary since a classical solution to the Monge--Amp\`ere equation is either convex or concave.   For different proofs and extensions, we refer to Cheng--Yau \cite{ChengandYau}, Caffarelli \cite{Caffarelli-InteriorEstimates-MA},
Jost--Xin \cite{JostandXin}, Fu \cite{Fu-Bernstein-SPL}, Li--Xu--Simon--Jia \cite{Book-Li-Xu-Simon-Jia-MA}, Warren \cite{Warren-Calibrations-MA}, Jia--Li \cite{Jia-Li-AsympMA-halfspace}, etc.
In 1998, Guti\'errez--Huang \cite{Gutierrez-Huang-JCP-ParaboMA} first proved the following Liouville type theorem on the parabolic Monge--Amp\`ere equation
\begin{equation}\label{equ-ParboMA-Krylov}
  -u_t\cdot\det D^2u=1\quad\text{in }\mathbb R^{n+1}_-:=\mathbb R^n\times (-\infty,0],
\end{equation}
extending the J\"orgens--Calabi--Pogorelov theorem on the elliptic Monge--Amp\`ere equation. A function $u:\mathbb R^{n+1}_-\rightarrow\mathbb R$ is called parabolically convex if it is continuous, convex in $x$ and non-increasing in $t$.
\begin{theorem}
  Let $u\in C^{4,2}(\mathbb R^{n+1}_-)$ be a parabolically convex solution to \eqref{equ-ParboMA-Krylov} such that there exist positive constants $m_1$ and $m_2$ with
\begin{equation}\label{equ-cond-BoundDeriva}
  -m_1\leq u_t(x,t)\leq -m_2,\quad\forall~(x,t)\in\mathbb R^{n+1}_-.
\end{equation}
Then up to a re-scaling and transformation,
\[
u(x,t)=-t+\frac{|x|^2}{2}.
\]
\end{theorem}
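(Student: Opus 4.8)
The plan is to adapt the affine-invariant proof of the J\"orgens--Calabi--Pogorelov theorem to the parabolic setting, using the two-sided bound \eqref{equ-cond-BoundDeriva} to keep a family of rescaled problems uniformly non-degenerate. First I would normalize: after subtracting an affine function of $x$ and using that $u$ is non-increasing in $t$, we may assume $u\geq 0$ on $\mathbb R^{n+1}_-$ with $u(0,0)=0$ and $Du(0,0)=0$. Writing $w:=-u_t$, equation \eqref{equ-ParboMA-Krylov} reads $w\det D^2u=1$ with $m_2\leq w\leq m_1$, so $1/m_1\leq\det D^2u\leq 1/m_2$; in particular each time-slice $u(\cdot,t)$ is a convex Alexandrov solution of a Monge--Amp\`ere equation whose right-hand side is pinched between positive constants. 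Moreover $m_2|t|\leq u(0,t)\leq m_1|t|$ for all $t\leq 0$, which controls the extent of sublevel sets in the time direction and, in particular, makes them bounded.

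Next I would study the parabolic sections $\mathcal P_h:=\{(x,t)\in\mathbb R^{n+1}_-:u(x,t)<h\}$. Using John's lemma applied to the slice $\{u(\cdot,0)<h\}$, Caffarelli's geometric estimates for Monge--Amp\`ere equations with right-hand side between positive constants, and the two-sided control of $u(0,t)$, one should show that for every $h>0$ there is a volume-normalized affine map $T_h$ of the $x$-variable so that, after also rescaling parabolically ($x$ by $h^{-1/2}$, $t$ by $h^{-1}$, value by $h^{-1}$), the section $\mathcal P_h$ is carried to a region trapped between two fixed parabolic cylinders $Q_c$ and $Q_C$, where $Q_r:=\{|x|<r\}\times(-r^2,0]$ and $c,C$ depend only on $n,m_1,m_2$. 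The point is that \eqref{equ-ParboMA-Krylov} is invariant under exactly this operation: a linear change of $x$ composed with the parabolic rescaling $u(x,t)\mapsto \lambda^{-2}u(\lambda x,\lambda^2 t)$ (with a matching value factor). Hence on each normalized section one again has a solution of \eqref{equ-ParboMA-Krylov} with $w\in[m_2,m_1]$, now on a fixed parabolic domain.

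Finally I would invoke the parabolic analogues of interior regularity for \eqref{equ-ParboMA-Krylov}: a Pogorelov-type interior $C^{1,1}$ bound and a Caffarelli/Evans--Krylov-type interior $C^{2,\alpha}$ estimate, together with an accompanying $C^{\alpha}$ bound for $u_t$, giving $\|D^2u\|_{C^{\alpha}}+\|u_t\|_{C^{\alpha}}\leq C(n,m_1,m_2)$ on the inner half of every normalized section. Transporting these bounds back through $T_h$ and the rescaling and letting $h\to\infty$, the oscillations of $D^2u$ and of $u_t$ over $\mathcal P_h$ decay, forcing $D^2u$ and $u_t$ to be globally constant. Thus, after undoing the normalization, $u(x,t)=-c\,t+\tfrac12\langle Bx,x\rangle$ with $B>0$ symmetric and $c\det B=1$, and a linear change of $x$ diagonalizing $B$ followed by one more parabolic rescaling reduces this to $u(x,t)=-t+|x|^2/2$.

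I expect the main obstacle to be the geometry of parabolic sections together with the parabolic interior estimates underlying the last step. Since the time variable scales differently from the spatial ones, ordinary affine invariance must be replaced by invariance under the anisotropic parabolic rescaling above, and one must rule out collapse of the normalized parabolic sections in the $t$-direction --- this is precisely where the uniform bound \eqref{equ-cond-BoundDeriva} enters, via $m_2|t|\le u(0,t)\le m_1|t|$ and its consequences for the Monge--Amp\`ere mass on slices. A formally shorter route would differentiate $\log(-u_t)+\log\det D^2u=0$ to see that each $u_{x_k}$, and also $u_t$, is caloric for the linearized operator $L:=u^{ij}\partial_{ij}+u_t^{-1}\partial_t$ (which is parabolic because $u_t<0$), and then apply a Liouville theorem for $L$ on the ancient slab $\mathbb R^n\times(-\infty,0]$ --- $u_t$ being bounded would be forced to a constant, and the $u_{x_k}$ to be affine in $x$. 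However, the Harnack inequality this requires is itself the parabolic counterpart of the Caffarelli--Guti\'errez estimates for the linearized Monge--Amp\`ere operator, so the essential work is the same.
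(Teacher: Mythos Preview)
The paper does not contain a proof of this statement. It is quoted in the introduction as the 1998 result of Guti\'errez--Huang \cite{Gutierrez-Huang-JCP-ParaboMA}, serving only as background and motivation for the paper's own theorems (Theorems~\ref{thm-main-0}, \ref{thm-main-1}, \ref{thm-main-refined}), which concern solutions that \emph{violate} \eqref{equ-cond-BoundDeriva}. So there is nothing in this paper to compare your proposal against.

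For what it is worth, your sketch is a fair outline of the strategy actually carried out by Guti\'errez and Huang: normalize, control the geometry of parabolic sections using \eqref{equ-cond-BoundDeriva} together with John's lemma and the volume/shape estimates for Monge--Amp\`ere sections, exploit the invariance of \eqref{equ-ParboMA-Krylov} under $(x,t,u)\mapsto(\lambda Ax,\lambda^2 t,\lambda^{-2}u)$ with $\det A=1$, and then feed uniform interior $C^{2,\alpha}$ estimates through the rescalings to force $D^2u$ and $u_t$ constant. The places you flag as delicate---non-collapse of normalized sections in the $t$-direction and the parabolic Pogorelov/Evans--Krylov step---are exactly where the substantive work in the original paper lies, and your proposal does not supply those arguments; it only names them. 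If you intend this as a proof rather than a roadmap, those two ingredients would need to be written out or precisely cited.
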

Later in 2018, Zhang--Bao \cite{Zhang-Bao-Calabi-paraboMA-periodic} further generalized the result to parabolic Monge--Amp\`ere equations with periodic right hand side and reduced regularity assumption to $u\in C^{2,1}(\mathbb R^{n+1}_-)$. For other generalizations of the Liouville theorems on the parabolic Monge--Amp\`ere equation, we refer to  \cite{Loftin-Tsui-AncientAffinNormalFlow,Xiong-Bao-JCP-ParaboMA,
Wang-Bao-Asymparabolic-MA,Zhang-Bao-Wang-JCP-ParabolicMA}, etc.

Historically, most studies focus on the case when the partial derivative $u_t$ satisfies condition \eqref{equ-cond-BoundDeriva}. In the pioneering work \cite{Gutierrez-Huang-JCP-ParaboMA}, they provide a counterexample
\[
u_0(x,t)=\left(
\dfrac{(n+1)^{n+2}}{(2n)^{n}(n-1)}
\right)^{\frac{1}{n+1}}
(-t)^{\frac{1}{n+1}}|x|^{\frac{2n}{n+1}},
\]
which satisfies \eqref{equ-ParboMA-Krylov} in viscosity sense but not in classical sense because  the Hessian matrix $D^2u_0$ is singular at  $x=0$.

Our first main theorem focuses on the existence  of smooth solution $u_1$ to \eqref{equ-ParboMA-Krylov}, which is parabolically convex but dissatisfies condition \eqref{equ-cond-BoundDeriva}.
\begin{theorem}\label{thm-main-0}
  Let $n\geq 2$. There exists a smooth, parabolically convex solution $u_1$ to \eqref{equ-ParboMA-Krylov} dissatisfying \eqref{equ-cond-BoundDeriva}. Especially, $u_1$ is radially symmetric in $x$-variable and is of variable separated form
\begin{equation}\label{equ-SepVar-Sol}
u_1(x,t)=w(t)\cdot\varphi(|x|),\quad\forall~(x,t)\in \mathbb R^{n+1}_-.
\end{equation}
\end{theorem}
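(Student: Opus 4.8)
The plan is to look for $u_1$ in the separated form \eqref{equ-SepVar-Sol}, i.e. $u_1(x,t)=w(t)\varphi(r)$ with $r=|x|$, and to reduce \eqref{equ-ParboMA-Krylov} to a pair of ordinary differential equations. For a radial function the Hessian $D^2\big(\varphi(|x|)\big)$ has the eigenvalue $\varphi''(r)$ once and $\varphi'(r)/r$ with multiplicity $n-1$, so $\det D^2u_1=w(t)^n\varphi''(r)\big(\varphi'(r)/r\big)^{n-1}$ and $u_{1,t}=w'(t)\varphi(r)$; substituting, \eqref{equ-ParboMA-Krylov} becomes
\begin{equation}\label{plan-sep}
\big(-w'(t)\,w(t)^n\big)\cdot\Big(\varphi(r)\,\varphi''(r)\,\big(\varphi'(r)/r\big)^{n-1}\Big)=1 .
\end{equation}
Since the two factors depend on disjoint variables, each is a positive constant; taking both equal to $1$, it suffices to construct $w\in C^\infty((-\infty,0])$ that is positive and strictly decreasing and solves $-w'w^n=1$, together with $\varphi$ that is positive, increasing, convex, smooth as a function of $x\in\mathbb R^n$, and solves
\begin{equation}\label{plan-phi}
\varphi(r)\,\varphi''(r)\,\big(\varphi'(r)/r\big)^{n-1}=1,\qquad r\in[0,\infty).
\end{equation}

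The time equation integrates at once: $-\tfrac1{n+1}\big(w^{n+1}\big)'=1$ yields $w(t)=\big((n+1)(-t)+c_0\big)^{1/(n+1)}$, and any $c_0>0$ makes $w$ smooth, positive and strictly decreasing on $(-\infty,0]$, with $w'<0$ and $w'(t)\to0^-$ as $t\to-\infty$. (The degenerate choice $c_0=0$ reproduces the non-smooth factor $(-t)^{1/(n+1)}$ of the Guti\'errez--Huang example, which is exactly why that solution is not classical.)

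For the space equation I would pass to the variable $\rho=r^2$ and set $\Phi(\rho)=\varphi(\sqrt\rho)$, so that $\varphi(|x|)$ is smooth on $\mathbb R^n$ precisely when $\Phi\in C^\infty([0,\infty))$; then \eqref{plan-phi} becomes the non-degenerate equation $2^n\,\Phi\,(\Phi')^{n-1}\big(\Phi'+2\rho\,\Phi''\big)=1$, whose evaluation at $\rho=0$ forces the compatibility relation $\Phi'(0)=\tfrac12\Phi(0)^{-1/n}$ once $\Phi(0)=a>0$ is prescribed. The first task is to produce a local solution that is smooth (indeed real-analytic) up to $\rho=0$: one may either insert $\Phi(\rho)=a+\sum_{k\ge1}a_k\rho^k$, solve the resulting recursion for the $a_k$ and bound it by a majorant, or treat $\rho=0$ as an apparent regular singular point and run a contraction on the corresponding integral equation. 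Writing $P:=(\varphi')^n$, equation \eqref{plan-phi} is equivalent to the first-order system $\varphi'=P^{1/n}$, $P'=nr^{n-1}/\varphi$, $\varphi(0)=a$, $P(0)=0$; from $P'>0$ for $r>0$ one sees that $\varphi$ is strictly increasing with $\varphi\ge a$, hence $P$, $\varphi'$ and $\varphi$ stay bounded on every finite interval, so the local solution extends to all of $[0,\infty)$ with no finite blow-up, and an elementary comparison gives $\varphi(r)\to\infty$ (in fact $\varphi(r)\sim c\,r^{2n/(n+1)}$, matching the profile of the counterexample). Convexity of $v:=\varphi(|x|)$ is then automatic, its Hessian having eigenvalues $2\Phi'=2P^{1/n}>0$ and $2\Phi'+4\rho\Phi''=\big(2^{n-1}\Phi(\Phi')^{n-1}\big)^{-1}>0$.

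Finally I would set $u_1(x,t)=w(t)\varphi(|x|)$. It is smooth on $\mathbb R^{n+1}_-$, solves \eqref{equ-ParboMA-Krylov} (the identity at $x=0$, where $D^2(\varphi(|x|))=\varphi''(0)\,I$ with $\varphi''(0)=a^{-1/n}$, being checked directly), is convex in $x$, and, since $u_{1,t}=w'(t)\varphi(|x|)<0$, non-increasing in $t$; hence it is parabolically convex. It dissatisfies \eqref{equ-cond-BoundDeriva}: for fixed $t_0<0$ one has $u_{1,t}(x,t_0)=w'(t_0)\varphi(|x|)\to-\infty$ as $|x|\to\infty$, so no lower bound $-m_1$ can hold (dually, $u_{1,t}(x_0,t)\to0$ as $t\to-\infty$, contradicting the upper bound $-m_2$). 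The one genuinely non-routine step is the existence and $C^\infty$-regularity up to the origin of the radial profile $\varphi$ solving the degenerate equation \eqref{plan-phi}; the remaining verifications are straightforward bookkeeping.
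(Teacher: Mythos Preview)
Your proposal is correct, and the overall architecture (separation of variables, solving the $w$-ODE explicitly, reducing to the radial ODE for $\varphi$, checking convexity and the failure of \eqref{equ-cond-BoundDeriva}) matches the paper. The substantive differences are in how the radial profile is constructed and how smoothness at the origin is obtained.

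For local existence of $\varphi$, the paper does not pass to $\rho=r^2$; it works directly with the integral form $(\varphi')^n=n\int_0^r s^{n-1}/\varphi(s)\,ds$, builds an explicit Euler break line on $[0,1]$, shows it is an $\epsilon$-approximate solution, and passes to a limit via Arzel\`a--Ascoli. Your route---rewriting the equation as $2^n\Phi(\Phi')^{n-1}(\Phi'+2\rho\Phi'')=1$ and producing an analytic $\Phi$ by a power series recursion with a Cauchy majorant (the coefficient of $a_{m+1}$ at order $m$ is $2^{n+1}a_0a_1^{n-1}m(m+1)\ne0$, so the recursion is indeed solvable)---is the standard singular-ODE approach and is more elementary, though you have left the majorant bound as a sketch. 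For smoothness of $\varphi(|x|)$ at $x=0$, your argument gets it for free from smoothness of $\Phi$ at $\rho=0$ and Whitney's theorem on even functions; the paper instead first gets only $\varphi\in C^2$ from L'H\^opital, and then invokes interior regularity theory for the elliptic Monge--Amp\`ere equation (Caffarelli, Jian--Wang, etc.) to bootstrap to $C^\infty$. Your route thus avoids appealing to deep PDE regularity.

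On the time factor, the paper normalizes to $w(0)=0$, i.e.\ $w(t)=(-(n+1)t)^{1/(n+1)}$, whereas you take $c_0>0$. Your observation that $c_0>0$ is needed for $w$ (and hence $u_1$) to be genuinely smooth up to $t=0$ is well taken; the paper's choice makes $u_{1,t}$ blow up at $t=0$, so its ``smooth'' should be read on $\mathbb R^n\times(-\infty,0)$. Either choice suffices for the theorem as stated, but yours is cleaner on this point. (A minor slip: the eigenvalue $2\Phi'$ equals $\varphi'/r$, not $2P^{1/n}$; this does not affect the positivity conclusion.)
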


Our second main theorem focuses on the asymptotic behavior and classification of such radially symmetric and variable separated   solutions.

\begin{theorem}\label{thm-main-1}
  Let $n\geq 2$ and $u$
  be a smooth, parabolically convex  solution to \eqref{equ-ParboMA-Krylov} of variable separated form \eqref{equ-SepVar-Sol}. Then up to a re-scaling and transformation, $u=u_1$ and we have
  \[
  w(t)=(n+1)^{\frac{1}{n+1}}(-t)^{\frac{1}{n+1}}\quad\text{and}\quad
  \lim_{r\rightarrow\infty}\dfrac{\varphi(r)}{r^{\frac{2n}{n+1}}}
  =\dfrac{n+1}{(2n)^{\frac{n}{n+1}}(n-1)^{\frac{1}{n+1}}},
  \]
  i.e., for any $t<0$,
  \[
  \dfrac{u(x,t)}{u_0(x,t)}\rightarrow 1,\quad\text{as }|x|\rightarrow\infty.
  \]
  The derivatives of $\varphi$ share similar asymptotic behavior as below
  \[
  \lim_{r\rightarrow\infty}\dfrac{\varphi'(r)}{\frac{2n}{n+1}r^{\frac{n-1}{n+1}}}=
  \lim_{r\rightarrow\infty}\dfrac{\varphi''(r)}{\frac{2n(n-1)}{(n+1)^2}r^{\frac{-2}{n+1}}}=
  \dfrac{n+1}{(2n)^{\frac{n}{n+1}}(n-1)^{\frac{1}{n+1}}}.
  \]
\end{theorem}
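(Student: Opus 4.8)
The plan is to substitute the variable-separated ansatz \eqref{equ-SepVar-Sol} into \eqref{equ-ParboMA-Krylov} and exploit the radial structure to reduce the PDE to a coupled pair of ODEs, one for $w(t)$ and one for $\varphi(r)$, tied together by a separation constant. Writing $D^2 u$ in polar coordinates, for a radial function $w(t)\varphi(|x|)$ one has $\det D^2 u = w(t)^n \varphi''(r)\bigl(\varphi'(r)/r\bigr)^{n-1}$, while $u_t = w'(t)\varphi(r)$. Plugging into $-u_t\det D^2 u = 1$ gives
\[
-\,w'(t)\,w(t)^n\;\cdot\;\varphi(r)\,\varphi''(r)\Bigl(\frac{\varphi'(r)}{r}\Bigr)^{n-1} = 1,
\]
so the $t$-dependent factor $-w'w^n$ and the $r$-dependent factor $\varphi\,\varphi''(\varphi'/r)^{n-1}$ must each be constant; normalizing that constant to be compatible, $-w'w^n = c$ forces $w(t)^{n+1} = c(n+1)(-t) + \text{const}$, and parabolic convexity (so $w$ non-increasing, and after the allowed re-scaling/transformation absorbing the additive constant) pins down $w(t) = (n+1)^{1/(n+1)}(-t)^{1/(n+1)}$ with the separation constant normalized to $1$. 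This is the first displayed conclusion.

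Next I would analyze the ODE for $\varphi$, namely $\varphi(r)\,\varphi''(r)\,\varphi'(r)^{n-1} = r^{n-1}$ on $(0,\infty)$, with the smoothness/convexity requirements forcing $\varphi'(0)=0$ and $\varphi''>0$. The key trick is that the left side is, up to a constant, an exact derivative: $\frac{d}{dr}\bigl(\varphi'(r)^{n+1}\bigr) = (n+1)\varphi'^{\,n}\varphi''$, which is not quite the combination above because of the extra factor $\varphi$. Instead I would try the substitution suggested by the target asymptotics — set $\varphi(r) \sim \beta r^{2n/(n+1)}$ and look for the equation governing the correction — or, more robustly, introduce $p = \varphi'$ as a function of $\varphi$ (or of $r$) and integrate. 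In fact multiplying through and writing things in terms of $\psi = \varphi^{?}$ one can often linearize; the cleanest route is probably to observe that along any solution the quantity $\varphi'^{\,n+1}/\varphi$ has controllable growth, integrate once to get $\varphi'(r)^{n+1} = \frac{2n}{n+1}\,r^{n} \cdot (1 + o(1))$ type estimate via a monotonicity/Gronwall argument, and then bootstrap. From $\varphi' \sim \bigl(\tfrac{2n}{n+1}\bigr)^{1/(n+1)} r^{(n-1)/(n+1)} \cdot (\text{const})$ one integrates to recover $\varphi$ and differentiates the ODE to recover $\varphi''$, yielding all three limits in the theorem; matching the constant in the $\varphi$-asymptotic against $u_0$ gives $\varphi(r)/r^{2n/(n+1)} \to (n+1)/\bigl((2n)^{n/(n+1)}(n-1)^{1/(n+1)}\bigr)$, and hence $u/u_0 \to 1$.

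The main obstacle I anticipate is the rigorous asymptotic analysis of the $\varphi$-ODE near $r=\infty$: showing not merely that $\varphi$ grows like $r^{2n/(n+1)}$ but that the ratio converges to the precise constant, and similarly for $\varphi'$ and $\varphi''$. The equation $\varphi\varphi''\varphi'^{n-1} = r^{n-1}$ is autonomous-like under scaling $r\mapsto \lambda r$, $\varphi \mapsto \lambda^{2n/(n+1)}\varphi$, so the candidate power law is the scaling-invariant solution; the work is to prove that \emph{every} admissible solution converges to it — i.e. a stability statement. I would handle this by setting $\varphi(r) = r^{2n/(n+1)} g(\ln r)$, deriving an autonomous ODE for $g$ in the variable $s = \ln r$, identifying $g \equiv \beta^{n+1}$ (the right constant) as an equilibrium, and showing it is the global attractor for the relevant class of positive, convex solutions — controlling the possibility of $g\to 0$, $g\to\infty$, or oscillation using the convexity of $\varphi$ and the sign structure. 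The existence half (that such a $\varphi$ exists, matching Theorem \ref{thm-main-0}) and the ``up to re-scaling and transformation $u = u_1$'' uniqueness statement then follow from uniqueness of solutions to the reduced ODE initial value problem with the admissible initial data, modulo the scaling symmetry.
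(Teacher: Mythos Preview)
Your reduction to the separated ODE system and the explicit solution of the $w$-equation match the paper exactly, as does the remark that uniqueness up to re-scaling comes from uniqueness for the $\varphi$-initial-value problem. The divergence is in how you propose to extract the asymptotics of $\varphi$.

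The paper does \emph{not} pass to the autonomous equation for $g(s)=\varphi(r)/r^{2n/(n+1)}$, $s=\ln r$, and argue stability of the equilibrium. Instead it runs a completely elementary bootstrap on the integral form $(\varphi')^n=n\int_0^r s^{n-1}/\varphi(s)\,\mathrm ds$. A lower bound $\varphi\ge C_1 r^k$ fed into this integral yields an upper bound $\varphi\le 1+C_1'\,r^{2-k/n}$, and an upper bound likewise yields a lower bound with the same exponent map $k\mapsto 2-k/n$; iterating drives $k\to 2n/(n+1)$ and produces two-sided estimates $C_3 r^{2n/(n+1)}\le\varphi\le 1+C_4 r^{2n/(n+1)}$. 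To pin down the constant the paper then sets $\overline K=\limsup\varphi/r^{2n/(n+1)}$, $\underline K=\liminf$, and by one more pass through the same integral shows $\underline K\ge\overline K^{-1/n}c$ and $\overline K\le\underline K^{-1/n}c$ for the same explicit $c=\bigl(\tfrac{n+1}{n-1}\bigr)^{1/n}\cdot\tfrac{n+1}{2n}$; combining these forces $\overline K=\underline K=(n+1)/\bigl((2n)^{n/(n+1)}(n-1)^{1/(n+1)}\bigr)$. The limits for $\varphi'$ and $\varphi''$ then fall out from L'H\^opital and the ODE.

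Your dynamical-systems plan is reasonable in spirit, but the step you yourself flag as the main obstacle is genuinely unresolved: showing the equilibrium of the $g$-ODE is a global attractor requires first confining $g$ to a compact set bounded away from zero --- equivalently $\varphi\asymp r^{2n/(n+1)}$ --- and ``convexity and sign structure'' alone do not give this (they only start you at $1\le\varphi\le 1+\tfrac12 r^2$). The paper's exponent iteration is exactly the missing ingredient here, and once that bound is in hand its $\limsup/\liminf$ trick is shorter and more elementary than ruling out periodic orbits or running a phase-plane argument. (As an aside, your heuristic $\varphi'^{\,n+1}\sim \tfrac{2n}{n+1}r^n$ has the wrong exponents; the correct first integral is $(\varphi')^n$, not $(\varphi')^{n+1}$.) Interestingly, the paper \emph{does} use a linearization of the type you describe, but only later, in the proof of the refined Theorem~\ref{thm-main-refined} for $n\ge4$, and only after the leading-order behavior from Theorem~\ref{thm-main-1} is already established.
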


Theorems \ref{thm-main-0} and \ref{thm-main-1} are existence theorem and Liouville type theorem of entire solutions respectively.
We would like to mention that the results on entire solutions may be very different from solutions on exterior domain and on bounded domains. See for instance  Dai \cite{Dai-ParboMA-ExterDP-1},  Wang--Bao \cite{Wang-Bao-ExtireKHessian}, Liu--Wang--Bao \cite{Liu-Wang-Bao-ExisEntireSol-LagMeanCur}, etc.

Furthermore, when $n\geq 4$, the asymptotic behavior of $\varphi$ can be further refined.

\begin{theorem}\label{thm-main-refined}
  Let $n\geq 4$ and $u_1$ be the smooth,  parabolically convex  solution to \eqref{equ-ParboMA-Krylov} constructed in Theorem \ref{thm-main-0}. Then there exists a dimensional constant $K_n>0$ such that
  \[
  \varphi(r)=\dfrac{n+1}{(2n)^{\frac{n}{n+1}}(n-1)^{\frac{1}{n+1}}}r^{\frac{2n}{n+1}}
  +O(r^{-K_n})\quad\text{as }r\rightarrow\infty.
  \]
  Furthermore, $K_n$ can be given explicitly as
  \begin{equation}\label{equ-Def-Kn}
  K_n=
  \left\{
  \begin{array}{llll}
  \frac{n(n-3)}{2(n+1)}, & \text{if }n=4,5,\\
  \frac{n}{n+1}\left(\frac{n-3}{2}-\sqrt{\frac{(n-3)^2}{4}-\frac{2(n-1)}{n}}\right), & \text{if }n\geq 6.\\
  \end{array}
  \right.
  \end{equation}
\end{theorem}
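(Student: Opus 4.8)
The plan is to exploit the variable--separated structure to reduce Theorem~\ref{thm-main-refined} to a scalar ODE, convert that ODE into autonomous form by a logarithmic change of variable, linearize around the constant equilibrium supplied by Theorem~\ref{thm-main-1}, and then run a linearization--stability estimate to capture the sharp decay rate of the correction.

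First I would record the ODE. Writing $u_1(x,t)=w(t)\varphi(|x|)$ and using that the Hessian of a radial function $\varphi(|x|)$ has eigenvalues $\varphi''(r)$ (simple) and $\varphi'(r)/r$ (multiplicity $n-1$), equation \eqref{equ-ParboMA-Krylov} separates as $-w'(t)w(t)^n\equiv\lambda$ and $\varphi(r)\varphi''(r)\varphi'(r)^{n-1}=\lambda^{-1}r^{n-1}$; with the normalization of Theorem~\ref{thm-main-1} one gets $\lambda=1$, i.e. $\varphi\,\varphi''\,(\varphi')^{n-1}=r^{n-1}$ for $r>0$. Put $\alpha=\tfrac{2n}{n+1}$ and $c_n=\tfrac{n+1}{(2n)^{n/(n+1)}(n-1)^{1/(n+1)}}$, so that $c_n^{\,n+1}\alpha^{n}(\alpha-1)=1$, and note $\varphi(r)\sim c_nr^{\alpha}$ by Theorem~\ref{thm-main-1}. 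Now substitute $s=\log r$ and $h(s)=e^{-\alpha s}\varphi(e^{s})$. Using $\tfrac{d}{dr}=e^{-s}\tfrac{d}{ds}$ one computes $\varphi'=e^{(\alpha-1)s}(\alpha h+h')$ and $\varphi''=e^{(\alpha-2)s}\big(\alpha(\alpha-1)h+(2\alpha-1)h'+h''\big)$, and since $(n+1)(\alpha-1)=n-1$ the powers of $e^{s}$ cancel, leaving the \emph{autonomous} equation
\[
h\big(\alpha(\alpha-1)h+(2\alpha-1)h'+h''\big)\big(\alpha h+h'\big)^{n-1}=1 .
\]
Applying Theorem~\ref{thm-main-1} to $\varphi,\varphi',\varphi''$ gives $h(s)\to c_n$, $h'(s)\to 0$, $h''(s)\to0$ as $s\to\infty$, and $h\equiv c_n$ is the equilibrium.

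Next I would linearize. Setting $g=h-c_n$ and expanding the displayed ODE, the zeroth order term reproduces $c_n^{\,n+1}\alpha^{n}(\alpha-1)=1$, the coefficient of $g$ becomes $\alpha(\alpha-1)(n+1)=\tfrac{2n(n-1)}{n+1}$, and the coefficient of $g'$ simplifies (using $\alpha(n+1)=2n$) via $(2\alpha-1)+(\alpha-1)(n-1)=n$; thus $g$ solves
\[
g''+n\,g'+\tfrac{2n(n-1)}{n+1}\,g=N[g],\qquad |N[g]|\le C\big(|g|+|g'|+|g''|\big)^{2},
\]
the right side being the quadratic--and--higher remainder of the Taylor expansion, valid for $s$ large since $g,g',g''\to0$. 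The characteristic roots of the linear part are $\mu_\pm=\tfrac12\big(-n\pm\sqrt D\big)$ with $D=n^{2}-\tfrac{8n(n-1)}{n+1}=\tfrac{n(n^{2}-7n+8)}{n+1}$; one checks $D<0$ for $n=4,5$ (so $\operatorname{Re}\mu_\pm=-n/2$) and $\mu_-<\mu_+<0$ for $n\ge6$. Define $\gamma_n=n/2$ if $n=4,5$ and $\gamma_n=-\mu_+=\tfrac12(n-\sqrt D)$ if $n\ge6$; the elementary identity $(n+1)(n^{2}-7n+8)=n(n-3)^{2}-8(n-1)$ shows precisely that $\gamma_n-\alpha=K_n$ with $K_n$ as in \eqref{equ-Def-Kn}.

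The substantive step, and the main obstacle, is to upgrade $g\to0$ to the quantitative bound $g(s)=O(e^{-\gamma_n s})$. I would treat the displayed equation for $g$ as a perturbed linear ODE on $[s_0,\infty)$: since $g,g',g''$ are already small, the equation gives $g''=-ng'-\tfrac{2n(n-1)}{n+1}g+N[g]$ with $|N[g]|\le C\varepsilon(|g|+|g'|+|g''|)$ for $s\ge s_0(\varepsilon)$, whence $|g''|\le C'(|g|+|g'|)$ and so $|N[g]|\le C''\varepsilon(|g|+|g'|)$, i.e. the nonlinearity is genuinely lower order. A variation--of--constants representation with the Green's function of $\tfrac{d^{2}}{ds^{2}}+n\tfrac{d}{ds}+\tfrac{2n(n-1)}{n+1}$ — which decays like $e^{\mu_+s}$ in the real case and like $e^{-ns/2}$ times a bounded oscillation in the complex case — together with a Gronwall/contraction argument on the weighted norm $\sup_{s\ge s_0}e^{\gamma's}(|g|+|g'|)$ makes that norm finite for every $\gamma'<\gamma_n$, and a final bootstrap (or, in the complex case, the direct Green's-function bound) yields the sharp exponent $\gamma_n$; only an upper bound is needed, so a faster-decaying (pure $\mu_-$-mode) solution would also be acceptable. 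I expect the bookkeeping of $N[g]$ and the treatment of the oscillatory regime to be the fiddly points, the rest being the algebra above. Translating back, $\varphi(r)-c_nr^{\alpha}=r^{\alpha}g(\log r)=O(r^{\alpha-\gamma_n})=O(r^{-K_n})$, which is the claim of Theorem~\ref{thm-main-refined}.
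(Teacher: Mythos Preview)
Your proposal is correct and follows the same overall strategy as the paper: pass to an autonomous second-order ODE by a logarithmic substitution, linearize around the equilibrium $c_n$, and read off the decay rate from the characteristic roots. Your single substitution $s=\log r$, $h(s)=r^{-\alpha}\varphi(r)$ is slightly cleaner than the paper's two-step route (first $s=r^{2n/(n+1)}$, giving an Euler-type equation for $\Psi=\Phi-c_ns$, then $\tilde t=\ln s$); the resulting linearizations coincide after the rescaling $\tilde t=\alpha s$, with the characteristic roots related by $\mu=\alpha(\lambda-1)$, and your identity $(n+1)(n^{2}-7n+8)=n(n-3)^{2}-8(n-1)$ is exactly what reconciles the two forms of $K_n$. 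For the endgame the paper absorbs the nonlinear remainder into $o(1)$ coefficients $R_1,R_2$ and cites asymptotic-integration theorems (Coddington--Levinson for a preliminary exponential bound, then Bodine--Lutz once $R_1,R_2\in L^{1}$), whereas you sketch a direct variation-of-constants plus Gronwall bootstrap; both arguments yield the sharp exponent, and your bootstrap (from any $\gamma'<\gamma_n$ to $\gamma_n$ via $N[g]=O(e^{-2\gamma's})$) is a legitimate substitute for the citations.
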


\begin{remark}\label{Rem-Dim1Case}
  When $n=1$, such a smooth, parabolically convex solution dissatisfying \eqref{equ-cond-BoundDeriva} still exists and is unique up to a re-scaling and transformation. However, the asymptotic behavior of $\varphi$ becomes
  \[
  \lim_{r\rightarrow\infty}\dfrac{\varphi^2(r)}{r^2\ln\varphi(r)}=2.
  \]
  When $n=2$ or $3$, it remains unknown whether the asymptotic behavior proved in Theorem \ref{thm-main-1} can be further refined into a similar form as in Theorem \ref{thm-main-refined}.
\end{remark}

The paper is organized as follows. In section \ref{sec-Existence}, we construct a smooth parabolically convex entire solution that dissatisfies \eqref{equ-cond-BoundDeriva} by constructing Euler's break line and prove Theorem \ref{thm-main-0}. In section \ref{sec-AsymAnal}, we analyze the asymptotics at infinity and prove Theorem \ref{thm-main-1}. In section \ref{sec-RefinedAsym}, we provide higher order asymptotic behavior of the solution $u_1$  and prove Theorem \ref{thm-main-refined} by introducing an intermediate variable and linearizing the equation. In section \ref{sec-Dim1}, we prove Remark \ref{Rem-Dim1Case}.

\section{Existence and uniqueness  of  entire solution with unbounded $u_t$}\label{sec-Existence}

Hereinafter, we let $n\geq 2$ unless stated otherwise.
Let $r:=|x|$ and $u$ be a variable separated  function in the form of \eqref{equ-SepVar-Sol}. By a direct computation, for all $r>0$ and $t\leq 0$,
\[
\det D^2u(x,t)=w^n(t)\cdot \varphi''(r)\cdot\left(\dfrac{\varphi'(r)}{r}\right)^{n-1}\quad\text{and}
\quad -u_t(x,t)=-w'(t)\cdot\varphi(r).
\]
Thus a necessary condition such that $u$ satisfies equation \eqref{equ-ParboMA-Krylov} is
\begin{equation}\label{equ-SepVar-Transformed}
-w^{\prime}(t) w^n(t) \cdot \varphi(r) \varphi^{\prime \prime}(r)\left(\frac{\varphi^{\prime}(r)}{r}\right)^{n-1}=1,\quad\forall~r>0,~t<0.
\end{equation}
Since \eqref{equ-SepVar-Transformed} is variable separated, there exists a positive constant $C>0$ such that
\begin{equation}\label{equ-SepVar-TwoEqu}
  \left\{
  \begin{array}{lllll}
    -w^{\prime}(t) w^n(t)=C, & t<0,\\
    \displaystyle \varphi(r) \varphi^{\prime \prime}(r)\left(\frac{\varphi^{\prime}(r)}{r}\right)^{n-1}=C^{-1}, & r>0.\\
  \end{array}
  \right.
\end{equation}
Without loss of generality, we may consider only the case  $C=1$, otherwise we re-scale by
\[
\tilde w(t):=C^{-\frac{1}{n+1}}w(t)\quad\text{and}\quad
\tilde\varphi(r):=C^{\frac{1}{n+1}}\varphi(r),
\]
in which case, $\tilde w$ and $\tilde \varphi$ satisfies system \eqref{equ-SepVar-TwoEqu} with the desired $C$ and
\[
\tilde u(x,t):=\tilde w(t)\cdot\tilde\varphi(|x|)=w(t)\cdot\varphi(|x|)=u(x,t).
\]

The first equation in \eqref{equ-SepVar-TwoEqu} is solved explicitly by
\[
w(t)=\left(w^{n+1}(0)-(n+1)t\right)^{\frac{1}{n+1}},\quad\text{where}\quad w(0)\geq 0.
\]
The second equation in \eqref{equ-SepVar-TwoEqu} can be transformed into
\[
\varphi\cdot (\varphi')^{n-1}\cdot\varphi''=r^{n-1},\quad\text{i.e.,}\quad
((\varphi')^n)'=\dfrac{nr^{n-1}}{\varphi},\quad\forall~r>0.
\]
Furthermore, to guarantee the differentiability of $u$ near the origin $x=0$ and the convexity, we have
\[
\varphi'(0)=0,\quad \varphi'(r)\geq 0\quad\text{and}\quad\varphi''(r)\geq 0,\quad\forall~r\geq 0.
\]
Up to a transformation in $t$-variable and the value of $u$, we consider only the case
\begin{equation}\label{equ-IniVal}
w(0)=0\quad\text{and}\quad \varphi(0)=1.
\end{equation}
Thus hereinafter, we focus on the following initial value problem of  ordinary differential equation
\begin{equation}\label{equ-Sys-InteForm}
  \left\{
  \begin{array}{llll}
    \displaystyle (\varphi'(r))^n=n\int_0^r\dfrac{s^{n-1}}{\varphi(s)}\mathrm ds, & r>0,\\
    \varphi(0)=1.\\
  \end{array}
  \right.
\end{equation}
The solution given by  initial value problem \eqref{equ-IniVal} is named as $u_1$.
We shall further prove that the solution to initial value problem \eqref{equ-Sys-InteForm} is also unique. Consequently, any  parabolically convex,  radially symmetric, variable separated solution  to equation \eqref{equ-ParboMA-Krylov} must be $u_1$ under suitable  re-scaling and transformation.

\subsection{Local existence result}

In this subsection, we prove that initial value problem \eqref{equ-Sys-InteForm} can be solved locally near $r=0$. Our main strategy is to construct Euler's break line and obtain a $\epsilon$-approximation solution.

\begin{lemma}\label{Lem-LocalExis}
  There exists a  solution $\varphi\in C^1([0,1])$ satisfying initial value problem \eqref{equ-Sys-InteForm} on $[0,1]$.
\end{lemma}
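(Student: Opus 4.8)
The plan is to establish local existence for the initial value problem \eqref{equ-Sys-InteForm} by the Euler broken-line (Euler polygon) method, which the introduction already advertises. The analytic difficulty here is mild but real: although the right-hand side of $((\varphi')^n)' = nr^{n-1}/\varphi$ is smooth away from $r=0$, the first-order system one naturally writes down, with state variables $\varphi$ and $\psi:=(\varphi')^n$, has a right-hand side $\psi' = nr^{n-1}/\varphi$, $\varphi' = \psi^{1/n}$, whose second component $\psi\mapsto\psi^{1/n}$ fails to be Lipschitz at $\psi=0$ — and $\psi(0)=0$ from the integral equation. So one cannot simply quote Picard–Lindelöf; instead one runs the Euler scheme, extracts a uniformly convergent subsequence via Arzelà–Ascoli, and checks the limit solves the integral equation.

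\textbf{Step 1: set up the scheme.} Fix the mesh size and partition $[0,1]$ into $N$ equal subintervals with nodes $r_k = k/N$. Define the Euler polygon $\varphi_N$ by $\varphi_N(0)=1$ and, inductively on each $[r_k,r_{k+1}]$, let $\varphi_N$ be affine with slope equal to the value dictated by the integral relation evaluated at the left endpoint: set $p_k := \big(n\int_0^{r_k} s^{n-1}/\varphi_N(s)\,ds\big)^{1/n}$ (so $p_0=0$) and $\varphi_N(r) := \varphi_N(r_k) + p_k (r-r_k)$ for $r\in[r_k,r_{k+1}]$. Since $\varphi_N(0)=1>0$ and the slopes $p_k\geq 0$, $\varphi_N$ is nondecreasing, hence $\varphi_N\geq 1$ on $[0,1]$ for every $N$; this keeps the integrand $s^{n-1}/\varphi_N(s) \in [0,1]$ bounded, so $0\le p_k \le (n\cdot r_k^n/n)^{1/n} = r_k \le 1$. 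Thus every $\varphi_N$ is $1$-Lipschitz and bounded between $1$ and $2$ on $[0,1]$.

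\textbf{Step 2: pass to the limit.} By Arzelà–Ascoli the family $\{\varphi_N\}$ is precompact in $C([0,1])$; extract $\varphi_{N_j}\to\varphi$ uniformly. Uniform convergence and the lower bound $\varphi_{N_j}\geq 1$ let one pass to the limit under the integral sign: $n\int_0^r s^{n-1}/\varphi_{N_j}(s)\,ds \to n\int_0^r s^{n-1}/\varphi(s)\,ds$ uniformly in $r$. The remaining point is to show the discrete slopes reconstruct $\varphi$ correctly, i.e.\ that in the limit $\varphi(r) = 1 + \int_0^r \big(n\int_0^\sigma s^{n-1}/\varphi(s)\,ds\big)^{1/n} d\sigma$. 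For this, compare $\varphi_N(r)-1$ with $\int_0^r p_N(\sigma)\,d\sigma$ where $p_N$ is the left-endpoint step function; the error between $p_k$ and the true integral at a general point of $[r_k,r_{k+1}]$ is controlled because the integrand is bounded and $\varphi_N$ varies by at most $1/N$ on that interval, and $t\mapsto t^{1/n}$ is uniformly continuous on $[0,1]$. Hence $\varphi_N(r) - 1 - \int_0^r (n\int_0^\sigma s^{n-1}/\varphi_N(s)\,ds)^{1/n}\,d\sigma \to 0$ uniformly, and combining with Step 2's convergence gives that the uniform limit $\varphi$ satisfies the integrated form of \eqref{equ-Sys-InteForm}. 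Differentiating, $\varphi\in C^1([0,1])$, $\varphi'(0)=0$, and $(\varphi'(r))^n = n\int_0^r s^{n-1}/\varphi(s)\,ds$.

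\textbf{Expected main obstacle.} The only genuinely delicate point is the non-Lipschitz behavior of $\psi\mapsto\psi^{1/n}$ near $\psi=0$, i.e.\ near $r=0$: this is why I avoid a contraction-mapping argument and why I must argue convergence of the Euler polygons by compactness rather than by a Gronwall-type estimate. Everything else — the a priori bounds $1\le\varphi_N\le 2$ and the uniform Lipschitz bound — is immediate once one observes that nonnegativity of the slopes forces $\varphi_N$ to stay above its initial value, keeping the singular denominator away from zero. (Uniqueness, promised later in the text, is a separate matter and is not needed for this lemma.)
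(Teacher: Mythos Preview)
Your proposal is correct and follows essentially the same route as the paper: construct Euler broken lines with slopes $p_k=\bigl(n\int_0^{r_k}s^{n-1}/\varphi_N(s)\,\mathrm ds\bigr)^{1/n}$, use the monotonicity to get the uniform bounds $1\le\varphi_N\le 2$ and a uniform Lipschitz constant, extract a $C^0$-limit by Arzel\`a--Ascoli, and pass to the limit in the integrated equation. The only cosmetic difference is that the paper quantifies the slope error via the elementary inequality $|a-b|\le|a^n-b^n|^{1/n}$ (giving the explicit bound $(r^n-r_{k}^n)^{1/n}\le 2^{1/n}/N^{1/n}$), whereas you appeal to uniform continuity of $t\mapsto t^{1/n}$ on a bounded interval; these amount to the same thing.
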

\begin{proof}
  For reading simplicity, we denote
  \[
  F(r,\varphi):=n^{\frac{1}{n}}\left(\int_0^r\dfrac{s^{n-1}}{\varphi(s)}\mathrm ds\right)^{\frac{1}{n}}.
  \]
  By a direct computation,
  \[
  F(r,\varphi)\leq r,\quad\forall~r>0,~\varphi\geq 1.
  \]
  For any large $m\in\mathbb N^+$ to be determined, we set
  \[
  r_0:=0\quad\text{and}\quad r_i:=r_{i-1}+\dfrac{1}{m}=\dfrac{i}{m},\quad\forall~i=1,2,\cdots,m
  \]
  and construct Euler break line $\psi$ with initial value $\psi(0):=1$,
  \[
  \psi(r):=\psi(r_{i-1})+F(r_{i-1},\psi)\cdot (r-r_{i-1}),\quad\forall~ r_{i-1}\leq r< r_i,\quad i=1,2,\cdots,m.
  \]
  Notice that $F(r,\varphi)$ relies only on the value of $\varphi$ on $(0,r)$, thus $\psi$ is well-defined.

 For any $r_{i-1}< r<r_i$, $i=1,2,\cdots,m$,
  \[
  \left|\psi'(r)-F(r,\psi)\right|=
    n^{\frac{1}{n}}\left(\left(\int_0^{r}
    \dfrac{s^{n-1}}{\psi(s)}\mathrm ds\right)^{\frac{1}{n}}-
    \left(\int_0^{r_{i-1}}
    \dfrac{s^{n-1}}{\psi(s)}\mathrm ds\right)^{\frac{1}{n}}\right).
  \]
  By the convex property,
  \[
  \left\{
  \begin{array}{lllll}
  (a-b)^n+b^n\leq a^n, & \forall~a>b>0,\\
  (b-a)^n+a^n\leq b^n, & \forall~b\geq a>0,\\
  \end{array}
  \right.
  \]
  i.e.,
  \begin{equation}\label{equ-ConvProp}
  |a-b|\leq  |a^n-b^n|^{\frac{1}{n}},\quad\forall~a, b>0.
  \end{equation}
  Consequently, we continue to have
  \[
  \left|\psi'(r)-F(r,\psi)\right|\leq n^{\frac{1}{n}}\left(\int_{r_{i-1}}^r\dfrac{s^{n-1}}{\psi(s)}\mathrm ds\right)^{\frac{1}{n}}.
  \]
  For any $\epsilon>0$, we may choose $m$ sufficiently large such that  for any $r_{i-1}\leq r<r_i$, $i=1,2,\cdots,m$,
  \[
   \left|\psi'(r)-F(r,\psi)\right|\leq (r^n-r_{i-1}^n)^{\frac{1}{n}}\leq
   \dfrac{2^{\frac{1}{n}}}{m^{\frac{1}{n}}}<\epsilon,
  \]
  where we used the fact that $\psi\geq 1$ for all $r>0.$

  Consequently, for any positive sequence $\{\epsilon_j\}_{j=1}^{\infty}$ with $\epsilon_j\rightarrow0$ as $j\rightarrow\infty$, there exists a sequence of Euler's break line $\{\psi_j\}_{j=1}^{\infty}$ with
  \[
  \left|\psi_j'(r)-F(r,\psi_j)\right|<\epsilon_j,\quad a.e.\quad\text{in } (0,1).
  \]
  Especially since $F(r,\varphi)\leq 1$ for all $r\in [0,1]$ and $\varphi\geq 1$, $\{\psi_j\}_{j=1}^{\infty}$ are uniformly bounded, equicontinous and satisfies  integral inequality
  \[
 \left| \psi_j(r)-\psi_j(0)-n^{\frac{1}{n}}\int_0^r\left(
  \int_0^s\dfrac{t^{n-1}}{\psi_j(t)}\mathrm dt
  \right)^{\frac{1}{n}}\mathrm ds\right|\leq\epsilon_j,\quad\forall~r\in (0,1).
  \]
  By the Arzela--Ascoli theorem, there exist $\psi_0\in C^0([0,1])$ and a subsequence (still denoted as $\{\psi_j\}$) such that
  \[
  \psi_j\rightarrow\psi_0\quad\text{in }C^0([0,1]),\quad\text{as }j\rightarrow\infty.
  \]
  Especially, $\psi_0(0)=1$, $\psi_0\geq 1$ in $[0,1]$ and we shall prove that
  \[
  \left|\int_0^s\left(
  \int_0^t\dfrac{t^{n-1}}{\psi_j(t)}\mathrm dt
  \right)^{\frac{1}{n}}\mathrm ds-
  \int_0^s\left(
  \int_0^t\dfrac{t^{n-1}}{\psi_0(t)}\mathrm dt
  \right)^{\frac{1}{n}}\mathrm ds
  \right|\rightarrow 0\quad\text{as }j\rightarrow\infty.
  \]
  In fact, by the convex property \eqref{equ-ConvProp} again, as $j\rightarrow \infty,$
  \[
  \begin{array}{lll}
  &\displaystyle \left|\int_0^r\left(
  \int_0^s\dfrac{t^{n-1}}{\psi_j(t)}\mathrm dt
  \right)^{\frac{1}{n}}\mathrm ds-
  \int_0^r\left(
  \int_0^s\dfrac{t^{n-1}}{\psi_0(t)}\mathrm dt
  \right)^{\frac{1}{n}}\mathrm ds
  \right| \\
   \leq &\displaystyle \int_0^r\left(
  \int_0^st^{n-1}\dfrac{\left|\psi_j(t)-\psi_0(t)\right|}{\psi_j(t)\psi_0(t)}\mathrm dt
  \right)^{\frac{1}{n}}\mathrm ds\\
  \leq & \displaystyle
   \int_0^r\left(
  \int_0^st^{n-1}\mathrm dt
  \right)^{\frac{1}{n}}\mathrm ds\cdot \max_{t\in[0,1]}\left|\psi_j(t)-\psi_0(t)\right|
  \rightarrow 0.\\
  \end{array}
  \]
  Consequently, $\psi_0$ is a continuous, monotone non-decreasing solution to the integral equation
  \[
  \psi_0(r)=1+n^{\frac{1}{n}}\int_0^r\left(
  \int_0^s\dfrac{t^{n-1}}{\psi_0(t)}\mathrm dt
  \right)^{\frac{1}{n}}\mathrm ds,\quad\forall~r\in (0,1).
  \]
  By the Newton--Leibnitz formula, this is equivalent to the initial value problem \eqref{equ-Sys-InteForm}.
\end{proof}

\subsection{Proof of theorem \ref{thm-main-0}}

In this subsection, we prove that the solution found in Lemma \ref{Lem-LocalExis} can be extended to   $(0,+\infty)$. According to the proof above, we only need to prove that  $\varphi$ is locally bounded on $(0,+\infty)$.

\begin{lemma}\label{Lem-GlobalExis}
  For any $R>0$, suppose $\varphi$ is a monotone non-decreasing solution to the initial value problem \eqref{equ-Sys-InteForm}, then
  \[
  1\leq\varphi(r)\leq 1+\dfrac{1}{2}R^{2},\quad\forall~r\in [0,R].
  \]
\end{lemma}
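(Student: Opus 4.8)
The plan is to read both bounds directly off the integral formulation \eqref{equ-Sys-InteForm}, so the lemma is essentially an \emph{a priori} estimate rather than anything requiring the existence machinery. The lower bound is immediate: since $\varphi(0)=1$ and $\varphi$ is monotone non-decreasing by hypothesis, $\varphi(r)\ge 1$ for all $r\ge 0$.

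For the upper bound I would feed this lower bound back into the equation. Because $\varphi(s)\ge 1$ on $[0,r]$, the first identity in \eqref{equ-Sys-InteForm} gives
\[
(\varphi'(r))^n=n\int_0^r\frac{s^{n-1}}{\varphi(s)}\,\mathrm ds\le n\int_0^r s^{n-1}\,\mathrm ds=r^n ,
\]
hence $0\le\varphi'(r)\le r$ for every $r>0$, the nonnegativity being part of the monotonicity assumption. Integrating this pointwise bound from $0$ to $r$ and using $\varphi(0)=1$,
\[
\varphi(r)=1+\int_0^r\varphi'(s)\,\mathrm ds\le 1+\int_0^r s\,\mathrm ds=1+\frac{r^2}{2}\le 1+\frac{R^2}{2}
\]
for all $r\in[0,R]$, which is exactly the claimed estimate. (This is the same computation that already gave $F(r,\varphi)\le r$ in the proof of Lemma \ref{Lem-LocalExis}, now integrated once more.)

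To complete the proof of Theorem \ref{thm-main-0} I would then point out that the only ingredient the Euler-polygon/Arzel\`a--Ascoli scheme of Lemma \ref{Lem-LocalExis} needs in order to run on an interval $[0,R]$ rather than $[0,1]$ is a locally uniform bound on $\varphi$ (equivalently on $F(r,\varphi)$), and Lemma \ref{Lem-GlobalExis} supplies precisely that; a standard continuation argument produces $\varphi\in C^1([0,\infty))$. Since $\varphi\ge 1$, the integrand $n s^{n-1}/\varphi(s)$ inherits the regularity of $\varphi$, so differentiating the identity $(\varphi'(r))^n=n\int_0^r s^{n-1}/\varphi(s)\,\mathrm ds$ and bootstrapping upgrades $\varphi$ to $C^\infty((0,\infty))$, with smoothness across $r=0$ coming from the radial structure. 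Taking $w(t)=((n+1)(-t))^{1/(n+1)}$ from the explicit solution of the first equation in \eqref{equ-SepVar-TwoEqu}, the product $u_1=w(t)\varphi(|x|)$ solves \eqref{equ-ParboMA-Krylov}, is parabolically convex, and has $-u_t=-w'(t)\varphi(|x|)$ growing without bound as $|x|\to\infty$, so \eqref{equ-cond-BoundDeriva} fails. There is no genuine obstacle here; the only point deserving a word of care is recording explicitly that $\varphi'\ge 0$ (so that $\varphi'(r)\le r$ integrates cleanly) and that the continuation step truly requires nothing beyond the local bound just proved.
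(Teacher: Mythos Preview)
Your proof is correct and matches the paper's own argument essentially line for line: bound $(\varphi')^n\le r^n$ using $\varphi\ge 1$, then integrate $\varphi'\le r$ with $\varphi(0)=1$. The additional paragraph on how this feeds into Theorem~\ref{thm-main-0} also tracks the paper's strategy.
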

\begin{proof}
  By the equation in \eqref{equ-Sys-InteForm}, we have
  \[
  (\varphi'(r))^n\leq n\int_0^rs^{n-1}\mathrm ds\leq r^n\quad\text{i.e.,}\quad
  \varphi'(r)\leq r,\quad\forall~r\in (0,R).
  \]
  Together with the initial value condition $\varphi(0)=1$, we have
  \[
  \varphi(r)\leq \frac{1}{2}r^2+1,\quad\forall~r\in [0,R].
  \]
  This finishes the proof of the desired local bound immediately.
\end{proof}

As a direct consequence of Lemma \ref{Lem-GlobalExis}, $\varphi\in C^1([0,\infty))$ is a classical monotone non-decreasing solution to \eqref{equ-Sys-InteForm}.

\begin{proof}[Proof of Theorem \ref{thm-main-0}]
  By Lemmas \ref{Lem-LocalExis} and \ref{Lem-GlobalExis}, there exists $\varphi\in C^1([0,+\infty))$ satisfying  \eqref{equ-Sys-InteForm}. By the L'Hospital's rule,
  \[
  \lim_{r\rightarrow 0}\dfrac{(\varphi'(r))^n}{r^n}=
  \lim_{r\rightarrow 0}\dfrac{1}{r^n}n\int_0^r\dfrac{s^{n-1}}{\varphi(s)}\mathrm ds=\lim_{r\rightarrow 0}\dfrac{1}{\varphi(r)}=1.
  \]
  Thus by taking partial derivative to \eqref{equ-Sys-InteForm}, dividing by $r^{n-1}$ and sending $r\rightarrow\infty$, we have
  \[
  \lim_{r\rightarrow 0}\dfrac{n(\varphi'(r))^{n-1}}{r^{n-1}}\varphi''(r)=\lim_{r\rightarrow 0}n\dfrac{1}{\varphi(r)}=n,\quad\text{i.e.,}\quad \lim_{r\rightarrow 0}\varphi''(r)=1.
  \]
  Hence $\varphi\in C^2([0,\infty))$ and satisfies  the initial value problem
  \begin{equation}\label{equ-Sys-origin}
  \left\{
  \begin{array}{llll}
    \varphi\cdot (\varphi')^{n-1}\cdot\varphi''=r^{n-1}, & r>0,\\
    \varphi'(0)=0,\\
    \varphi(0)=1.
  \end{array}
  \right.
  \end{equation}
  Thus
  \[
  u_1(x,t):=(-(n+1)t)^{\frac{1}{n+1}}\cdot\varphi(|x|)
  \]
  is a classical parabolically convex solution to \eqref{equ-ParboMA-Krylov},  $u_1\in C^2(\mathbb R^{n+1}_-)$ and dissatisfies  \eqref{equ-cond-BoundDeriva}.

  It remains to prove that $u_1$ is smooth near $x=0$, then the smoothness of $u_1$ follows similarly from the proof. Notice that $u_1$ is variable separated with
  \[
  -(u_1)_t=-w'(t)\varphi(r)=(n+1)^{\frac{1}{n+1}-1}(-t)^{-\frac{n}{n+1}}\varphi(r)
  =\dfrac{1}{n+1}(-t)^{-1}u_1.
  \]
  Thus for any given $t<0$, $u_1(\cdot,t)$ is a classical solution to
  \[
  \det D^2u_1=(n+1)(-t)\dfrac{1}{u_1}\quad\text{in }\mathbb R^n.
  \]
  Since
  \[
  \lim_{r\rightarrow 0}\dfrac{\varphi'(r)}{r}=
  \lim_{r\rightarrow 0}\varphi''(r)=1,
  \]
  the Hessian matrix of $D^2u_1$ is positive and bounded in a neighbourhood of origin $x=0$. Since $u_1$ is parabolically convex, $\varphi'(r)$ is strictly positive and hence $D^2u_1$ is locally positive and bounded in any dense subset in $\mathbb R^n.$
  By a direct computation,
  \[
  D\frac{1}{u_1}=-\dfrac{Du_1}{u_1},\quad D^2\dfrac{1}{u_1}=-\dfrac{u_1\cdot D^2u_1+Du_1\times Du_1}{u_1^2},\quad\cdots.
  \]
  Since $u_1(\cdot,t)\in C^2(B_1)$ and $u_1\geq 1$, we have by induction that  for all $k\in\mathbb N$,  $u_1(\cdot,t)\in C^k(B_1)$ implies that $\frac{1}{u_1}(\cdot,t)\in C^k(B_1)$.
  The smoothness of $u_1$ follows from  the regularity theory of the Monge--Amp\`ere equation, see for instance  Caffarelli \cite{Caffarelli-InteriorEstimates-MA}, Jian--Wang \cite{Jian-Wang-ContinuityEstimate-MA}, Figalli--Jhaveri--Mooney \cite{Figalli-Jhaveri-Mooney-Replacement-JianWang}, etc.
\end{proof}

\subsection{Uniqueness of solution}
We only need to prove that for any $R>0$, the solution is unique on $[0,R]$, then the desired uniqueness on $[0,\infty)$ follows immediately. Recall that the initial value  problem \eqref{equ-Sys-InteForm}  is equivalent to the following  neutral functional differential equation (NFDE) with unbounded delay \cite{Book-Lakshm-Wen-Zhang-ThyofODE-UnbounDelay}
\begin{equation}\label{equ-temp4}
\left\{
\begin{array}{llll}
  \varphi'(r)=F(r,\varphi), & r>0,\\
  \varphi(0)=1.
\end{array}
\right.
\end{equation}
Suppose $\varphi$ and $\psi$ are two different monotone non-decreasing solutions to the initial value above, then for any given $r\in [0,R]$, we set
\[
g(k):=n^{\frac{1}{n}}\left(\int_0^r\dfrac{s^{n-1}}{\varphi(s)+k(\psi(s)-\varphi(s))}
\mathrm ds\right)^{\frac{1}{n}},\quad k\in [0,1].
\]
Then $g(0)=F(r,\varphi)$, $g(1)=F(r,\psi)$ and the  derivative of $g$ is
\[
g'(k)=-n^{\frac{1}{n}-1}\left(
\int_0^r\dfrac{s^{n-1}}{\varphi+k(\psi-\varphi)}\mathrm ds
\right)^{\frac{1}{n}-1}\cdot \int_0^r\dfrac{s^{n-1}}{(\varphi+k(\psi-\varphi))^2}\cdot(\psi-\varphi)\mathrm ds.
\]
Consequently, applying the Newton--Leibnitz formula to $g$, there exist constants $C>0$ independent of $r\in[0,R]$ (which may vary from line to line)  such that
\begin{equation}\label{equ-temp5}
\begin{array}{lllll}
&|F(r,\varphi)-F(r,\psi)|=|g(1)-g(0)|\\
\leq &\displaystyle
n^{\frac{1}{n}-1}\int_0^1 \left(
\int_0^r\dfrac{s^{n-1}}{\varphi+k(\psi-\varphi)}\mathrm ds
\right)^{\frac{1}{n}-1}\cdot \int_0^r\dfrac{s^{n-1}}{(\varphi+k(\psi-\varphi))^2}\cdot |\psi-\varphi|\mathrm ds\mathrm dk\\
\leq & \displaystyle C\int_0^1 \left(\int_0^r \dfrac{s^{n-1}}{1+\frac{1}{2}R^2}\mathrm ds\right)^{\frac{1}{n}-1}
\cdot \int_0^r s^{n-1}\mathrm ds\mathrm dk\cdot ||\varphi-\psi||_{C^0([0,r])}\\
\leq &\displaystyle C\int_0^1r^{1-n}\cdot r^n\mathrm dk \cdot ||\varphi-\psi||_{C^0([0,r])}\\
\leq & Cr||\varphi-\psi||_{C^0([0,r])},\\
\end{array}
\end{equation}
where we used the monotonicity and the locally uniform bound as in Lemma \ref{Lem-GlobalExis} i.e.,
\[
1\leq \varphi(r),~\psi(r)\leq 1+\frac{1}{2}R^2,\quad\forall~r\in [0,R].
\]
By the uniqueness result of initial value problem of functional differential equation, such as Theorem 3.2.2 in \cite{Book-Lakshm-Wen-Zhang-ThyofODE-UnbounDelay} and  Theorem 2.1 in \cite{Book-Coddington-Levinson-ODE}, we have $\varphi=\psi$ on $[0,R]$.
For reading simplicity, we  include a proof here.
\begin{proof}[Proof of the uniqueness of solution]
  Integrating both sides of \eqref{equ-temp4}, we find that $\varphi$ and $\psi$ satisfy
  \[
  \varphi(r)=1+\int_0^rF(s,\varphi)\mathrm ds,\quad\psi(r)=1+\int_0^rF(s,\psi)\mathrm ds.
  \]
  Consequently, by the triangle inequality,
  \[
  |(\varphi(r)-\psi(r))-(\varphi(0)-\psi(0))|\leq \int_0^r|F(s,\varphi)-F(s,\psi)|\mathrm ds.
  \]
  Let
  \[
  k(r):=|\varphi(r)-\psi(r)|
  \quad\text{and}\quad \tilde k(r):=\max_{0\leq s\leq r}|\varphi(s)-\psi(s)|,
  \]
  together with estimate \eqref{equ-temp5}, we have
  \[
  k(r)\leq C\int_0^rs||\varphi-\psi||_{C^0([0,s])}\mathrm ds=
  C\int_0^rs\tilde k(s)\mathrm ds.
  \]
  Since the right hand side is monotone increasing with respect to $r$, we have
  \[
  \tilde k(r)=\max_{0\leq s\leq r} k(s)\leq C
  \max_{0\leq s\leq r}  \int_0^st\tilde k(t)\mathrm dt
  =C\int_0^rs\tilde k(s)\mathrm ds,\quad\forall~r\in [0,R].
  \]
  Let
  \[
  K(r):=\int_0^rs\tilde k(s)\mathrm ds,
  \]
  then  $K(0)=0$, $K$ is monotone increasing   and satisfies
  \[
  K'(r)=r\tilde k(r)\leq CrK(r)\leq CRK(r),\quad\forall~r\in (0,R).
  \]
  Consequently, multiplying both sides by $e^{-CRr}$ and integrating on $(0,r)$, we have
  \[
  (e^{-CRr}K(r))'\leq 0,\quad\forall~r\in (0,R)
  \]
  and hence
  \[
  0\leq e^{-CRr}K(r)\leq e^{0}K(0)=0,\quad\text{i.e.,}\quad K(r)\equiv 0,\quad\forall~r\in [0,R].
  \]
  Since $k$ and $\tilde k$ are also non-negative, we have
  \[
  k(r)=\tilde k(r)=0,\quad\forall~r\in [0,R],
  \]
  which implies that $\varphi$ and $\psi$ are the same  on $[0,R]$ and finishes the proof.
\end{proof}

\section{Asymptotics of radially symmetric variable separated solutions}\label{sec-AsymAnal}

In this section, we reveal the asymptotic behavior of $u_1$  at infinity and prove Theorem \ref{thm-main-1}.
Based on the initial value problem  \eqref{equ-Sys-InteForm}, we prove that a lower bound of $\varphi$ provides an upper bound of $\varphi$ and vice versa. Repeatedly using these two arguments, we iterate and prove upper and lower estimates with the desired power of $r$.

\begin{lemma}\label{Lem-Low2High-attempt}
  Let $\varphi$ be the positive non-decreasing solution to \eqref{equ-Sys-InteForm}. Suppose there exist $C_1>0$ and $0\leq k\leq \frac{2n}{n+1}$ such that
  \begin{equation}\label{equ-lowerBD}
  \varphi(r)\geq C_1r^k,\quad\forall~r\geq 0.
  \end{equation}
  Then
  \begin{equation}\label{equ-temp2}
  \varphi(r)\leq 1+C_1^{-\frac{1}{n}}\left(\dfrac{n+1}{n-1}\right)^{\frac{1}{n}}\cdot \dfrac{n+1}{2n}r^{2-\frac{k}{n}},\quad\forall~r\geq 0.
  \end{equation}
\end{lemma}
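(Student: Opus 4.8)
The plan is to feed the hypothesised lower bound \eqref{equ-lowerBD} directly into the integral identity for $(\varphi')^n$ in \eqref{equ-Sys-InteForm}, and then integrate twice.

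First I would substitute $\varphi(s)\ge C_1 s^k$ into the equation of \eqref{equ-Sys-InteForm} to get
\[
(\varphi'(r))^n=n\int_0^r\frac{s^{n-1}}{\varphi(s)}\,\mathrm ds\le\frac{n}{C_1}\int_0^r s^{\,n-1-k}\,\mathrm ds=\frac{n}{C_1(n-k)}\,r^{\,n-k},
\]
where the integral converges because $n-1-k>-1$; this uses $k\le\frac{2n}{n+1}<2\le n$ for $n\ge2$. Taking $n$-th roots (all quantities are non-negative and $\varphi'\ge0$ by monotonicity) yields
\[
\varphi'(r)\le\Bigl(\tfrac{n}{C_1(n-k)}\Bigr)^{1/n}r^{\,1-\frac kn},\qquad r>0.
\]
Then I would integrate on $[0,r]$ and use $\varphi(0)=1$ (the exponent satisfies $1-\frac kn>-1$, so this is legitimate), obtaining
\[
\varphi(r)\le 1+\Bigl(\tfrac{n}{C_1(n-k)}\Bigr)^{1/n}\cdot\frac{n}{2n-k}\,r^{\,2-\frac kn},\qquad r\ge0.
\]

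Finally I would compare the $k$-dependent coefficient with its value at the endpoint $k=\frac{2n}{n+1}$. Both $\bigl(\frac{n}{n-k}\bigr)^{1/n}$ and $\frac{n}{2n-k}$ are increasing in $k$ on $\bigl[0,\frac{2n}{n+1}\bigr]$, hence so is their product, and at $k=\frac{2n}{n+1}$ one has $n-k=\frac{n(n-1)}{n+1}$ and $2n-k=\frac{2n^2}{n+1}$, so $\frac{n}{n-k}=\frac{n+1}{n-1}$ and $\frac{n}{2n-k}=\frac{n+1}{2n}$; this gives exactly the constant in \eqref{equ-temp2}. Since $C_1^{-1/n}r^{2-k/n}\ge0$, enlarging the coefficient to this value preserves the inequality and completes the argument.

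There is no real obstacle: the only points requiring a little care are the integrability of $s^{n-1-k}$ near $s=0$ (which is where the restriction $k\le\frac{2n}{n+1}$ enters) and the monotonicity of the coefficient in $k$, which lets one replace the exact constant by the clean one evaluated at $k=\frac{2n}{n+1}$. Everything else is two elementary integrations.
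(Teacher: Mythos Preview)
Your proof is correct and follows exactly the same route as the paper: plug the lower bound into the integral identity, take $n$-th roots, integrate once more using $\varphi(0)=1$, and then enlarge the resulting $k$-dependent constant to its value at the endpoint $k=\tfrac{2n}{n+1}$. The paper's proof is in fact terser at the last step (it just writes ``since $k\le\tfrac{2n}{n+1}$''), whereas you spell out the monotonicity of the two factors, which is the cleaner justification; one small quibble is that integrability of $s^{n-1-k}$ near $0$ only needs $k<n$, so the restriction $k\le\tfrac{2n}{n+1}$ really enters through the coefficient comparison rather than through convergence of the integral.
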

\begin{proof}
The lower bound of $\varphi$ implies
  \[
    (\varphi'(r))^n = n\int_0^r\dfrac{s^{n-1}}{\varphi(s)}\mathrm ds\leq  C_1^{-1}\dfrac{n}{n-k}r^{n-k},\quad\forall~r>0.
  \]
  Together with the initial value condition $\varphi(0)=1$,
\[
\varphi(r)\leq 1+C_1^{-\frac{1}{n}}\left(\dfrac{n}{n-k}\right)^{\frac{1}{n}}\cdot
\dfrac{n}{2n-k}r^{\frac{2n-k}{n}}.
\]
This finishes the proof of this lemma since $k\leq \frac{2n}{n+1}$.
\end{proof}

\begin{lemma}\label{Lem-High2Low-attempt}
  Let $\varphi$ be the positive non-decreasing solution to \eqref{equ-Sys-InteForm}.
  Suppose there exist $C_2>0$ and $\frac{2n}{n+1}\leq k\leq n$ such that
  \begin{equation}\label{equ-upperBD}
  \varphi(r)\leq 1+C_2r^k,\quad\forall~r\geq 0.
  \end{equation}
  Then
  \[
  \varphi(r)\geq\dfrac{1}{2(1+C_2)^{\frac{1}{n}}} r^{2-\frac{k}{n}},\quad\forall~r\geq 0.
  \]
\end{lemma}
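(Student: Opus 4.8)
The plan is to run the argument of Lemma~\ref{Lem-Low2High-attempt} in reverse: the upper bound \eqref{equ-upperBD} feeds into the integral identity in \eqref{equ-Sys-InteForm} to produce a lower bound for $\varphi'$, which then integrates to the desired lower bound for $\varphi$.

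The one genuine idea is to use monotonicity of $\varphi$ \emph{before} invoking \eqref{equ-upperBD}, rather than substituting the pointwise bound straight into the integrand. Since $\varphi$ is non-decreasing with $\varphi(0)=1$, one has $\varphi(s)\le\varphi(r)$ on $[0,r]$, whence for every $r>0$,
\[
(\varphi'(r))^n=n\int_0^r\frac{s^{n-1}}{\varphi(s)}\,\mathrm ds\ \ge\ \frac{n}{\varphi(r)}\int_0^r s^{n-1}\,\mathrm ds\ =\ \frac{r^n}{\varphi(r)}\ \ge\ \frac{r^n}{1+C_2r^k},
\]
the last step by \eqref{equ-upperBD}. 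Substituting \eqref{equ-upperBD} directly into the integrand would also work, but then the borderline exponent $k=n$ would require separate treatment, since there $\int_0^r s^{n-1-k}\,\mathrm ds$ is logarithmic; the monotonicity step sidesteps this.

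Next I would split into $0\le r\le 1$ and $r\ge 1$. For $0\le r\le1$ the claim is trivial: since $k\le n$ the exponent $2-\tfrac kn\ge1>0$, so $r^{2-k/n}\le1\le\varphi(r)$, and $2(1+C_2)^{1/n}\ge2$ closes this range. For $r\ge1$ one has $1+C_2r^k\le(1+C_2)r^k$, so the display above yields $\varphi'(r)\ge(1+C_2)^{-1/n}r^{\,1-k/n}$; integrating from $1$ to $r$ and using $\varphi(1)\ge1$ gives
\[
\varphi(r)\ \ge\ 1+\frac{1}{(1+C_2)^{1/n}}\cdot\frac{r^{\,2-k/n}-1}{2-\tfrac kn}.
\]
It then remains to verify the elementary inequality $1+\frac{\rho^{\gamma}-1}{\gamma}\ge\frac{\rho^{\gamma}}{2}$ for $\rho\ge1$ and $\gamma:=2-\tfrac kn\in[1,2]$: multiplying by $\gamma$ and rearranging gives $\gamma-1\ge\rho^{\gamma}\cdot\frac{\gamma-2}{2}$, whose right side is $\le0$ (as $\gamma\le2$) and left side $\ge0$ (as $\gamma\ge1$). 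Combined with $(1+C_2)^{1/n}\ge1$ this is exactly $\varphi(r)\ge\frac{1}{2(1+C_2)^{1/n}}r^{\,2-k/n}$ for $r\ge1$, completing the proof.

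I do not expect a real obstacle here; the proof is short once one sees that monotonicity should be used up front. The only point requiring care is the bookkeeping of the exponent $\gamma=2-\tfrac kn$: one needs $\gamma\in[1,2]$ so the final elementary inequality has the correct signs, and this is precisely where the hypotheses on $k$ (in fact only $0\le k\le n$ is needed) enter; the lower restriction $\tfrac{2n}{n+1}\le k$ is there only to dovetail with Lemma~\ref{Lem-Low2High-attempt} in the iteration.
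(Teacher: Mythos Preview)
Your proof is correct. Both you and the paper end up with the same key pointwise bound $\varphi'(r)\ge r/(1+C_2r^k)^{1/n}$ and then split $r\le1$ versus $r>1$, so the architecture is identical. The difference is purely tactical: the paper substitutes \eqref{equ-upperBD} directly into the integrand and recovers the bound via integration by parts (dropping a positive remainder), then integrates $\varphi'$ from $0$ using a second integration by parts; you instead use the monotonicity $\varphi(s)\le\varphi(r)$ on $[0,r]$ to get the same bound in one line, then integrate from $1$ after the cruder estimate $1+C_2r^k\le(1+C_2)r^k$. Your route avoids both integrations by parts and, as you note, handles the endpoint $k=n$ without special care. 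The paper's route, on the other hand, yields the slightly sharper intermediate inequality $\varphi(r)\ge 1+\tfrac12 r^2/(1+C_2r^k)^{1/n}$ valid for all $r\ge0$, though this extra precision is not used downstream.
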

\begin{proof}
 The upper bound of $\varphi$ implies
 \[
   (\varphi'(r))^n \geq  n\int_0^r\dfrac{s^{n-1}}{1+C_2s^k}\mathrm ds
 = \dfrac{r^n}{1+C_2r^k}+C_2k
   \int_0^r\dfrac{s^{n+k-1}}{(1+C_2s^k)^2}\mathrm ds.
 \]
 Since $C_2>0$, it follows immediately that
 \[
 \varphi'(r)\geq \dfrac{r}{(1+C_2r^k)^{\frac{1}{n}}},\quad\forall~r>0.
 \]
 Together with the initial value condition $\varphi(0)=1$, we have
 \[
   \varphi(r) \geq  1+\int_0^r\dfrac{s}{(1+C_2s^k)^{\frac{1}{n}}}\mathrm ds=
    1+\dfrac{1}{2}\dfrac{r^2}{(1+C_2r^k)^{\frac{1}{n}}}
   +\dfrac{C_2k}{2n}\int_0^r\dfrac{s^{2+k}}{(1+C_2s^k)^{\frac{n+1}{n}}}\mathrm ds.
 \]
 As a consequence,
 \[
 \varphi(r)\geq 1+\dfrac{1}{2}\dfrac{r^2}{(1+C_2r^k)^{\frac{1}{n}}},\quad\forall~r\geq 0.
 \]
 Separately discuss $r\leq 1$ and $r>1$ two cases, we find
 \[
 \varphi(r)\geq \left\{
 \begin{array}{lll}
   1, & 0\leq r\leq 1,\\
   \dfrac{1}{2(1+C_2)^{\frac{1}{n}}} r^{2-\frac{k}{n}}, & r>1,\\
 \end{array}
 \right.\geq \dfrac{1}{2(1+C_2)^{\frac{1}{n}}} r^{2-\frac{k}{n}},\quad\forall~r\geq 0.
 \]
 This finishes the proof of this lemma.
\end{proof}

Combining these two lemmas, we obtain an iteration scheme as below.
\begin{lemma}\label{Lem-iterSchem}
  Let $\varphi$ be the monotone non-decreasing solution to \eqref{equ-Sys-InteForm}. Suppose there exist $0<C_1\leq 1$ and $0\leq k\leq \frac{2n}{n+1}$ such that the lower bound \eqref{equ-lowerBD} holds. Then for all $r\geq 0$,
  \[
  \varphi(r)\geq 2^{-\frac{n+1}{n}} \left(\dfrac{n+1}{n-1}\right)^{-\frac{1}{n^2}}\cdot
  \left(\dfrac{n+1}{n}\right)^{-\frac{1}{n}} C_1^{\frac{1}{n^2}} r^{\frac{2n-2}{n}+\frac{k}{n^2}}.
  \]
\end{lemma}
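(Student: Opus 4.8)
The plan is to chain the two preceding lemmas: first convert the hypothesised lower bound into an upper bound via Lemma~\ref{Lem-Low2High-attempt}, then feed that upper bound back into Lemma~\ref{Lem-High2Low-attempt} to recover a sharper lower bound, and finally bookkeep the constants. Concretely, since \eqref{equ-lowerBD} holds with $0\le k\le\frac{2n}{n+1}$, Lemma~\ref{Lem-Low2High-attempt} gives
\[
\varphi(r)\le 1+C_2\,r^{2-\frac{k}{n}},\qquad
C_2:=C_1^{-\frac1n}\Bigl(\frac{n+1}{n-1}\Bigr)^{\frac1n}\cdot\frac{n+1}{2n},\qquad\forall\,r\ge 0 .
\]
Put $k':=2-\frac{k}{n}$. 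As $k$ ranges over $[0,\frac{2n}{n+1}]$ one has $k'\in[\frac{2n}{n+1},2]$, and since $n\ge 2$ this lies in $[\frac{2n}{n+1},n]$; hence hypothesis \eqref{equ-upperBD} of Lemma~\ref{Lem-High2Low-attempt} is met with exponent $k'$ and constant $C_2$.

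Applying Lemma~\ref{Lem-High2Low-attempt} then yields
\[
\varphi(r)\ge\frac{1}{2(1+C_2)^{\frac1n}}\,r^{\,2-\frac{k'}{n}},\qquad\forall\,r\ge 0,
\]
and a one-line computation gives $2-\frac{k'}{n}=2-\frac{2}{n}+\frac{k}{n^2}=\frac{2n-2}{n}+\frac{k}{n^2}$, which is precisely the exponent in the conclusion. It remains to bound $1+C_2$ from above by an explicit multiple of $C_1^{-1/n}$. This is where the hypothesis $0<C_1\le 1$ is used: it gives $C_1^{-1/n}\ge 1$, and combined with $\bigl(\frac{n+1}{n-1}\bigr)^{1/n}\ge 1$ and $\frac{n+1}{n}\ge 1$ we get $1\le C_1^{-1/n}\bigl(\frac{n+1}{n-1}\bigr)^{1/n}\frac{n+1}{n}$. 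Adding this to the definition of $C_2$ and using the elementary inequality $\frac{n+1}{n}+\frac{n+1}{2n}=\frac{3(n+1)}{2n}\le\frac{2(n+1)}{n}$ (equivalently $\frac32\le 2$),
\[
1+C_2\le 2\,C_1^{-\frac1n}\Bigl(\frac{n+1}{n-1}\Bigr)^{\frac1n}\frac{n+1}{n}.
\]

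Raising this to the power $\frac1n$ and substituting into the lower bound above,
\[
\varphi(r)\ge\frac{1}{2}\cdot 2^{-\frac1n}\Bigl(\frac{n+1}{n-1}\Bigr)^{-\frac{1}{n^2}}\Bigl(\frac{n+1}{n}\Bigr)^{-\frac1n}C_1^{\frac{1}{n^2}}\,r^{\frac{2n-2}{n}+\frac{k}{n^2}}
=2^{-\frac{n+1}{n}}\Bigl(\frac{n+1}{n-1}\Bigr)^{-\frac{1}{n^2}}\Bigl(\frac{n+1}{n}\Bigr)^{-\frac1n}C_1^{\frac{1}{n^2}}\,r^{\frac{2n-2}{n}+\frac{k}{n^2}},
\]
which is exactly the asserted estimate (using $\frac12\cdot 2^{-1/n}=2^{-(n+1)/n}$).

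There is no serious analytic obstacle here, since Lemmas~\ref{Lem-Low2High-attempt} and~\ref{Lem-High2Low-attempt} are already available; the two points requiring attention are the range check (so that Lemma~\ref{Lem-High2Low-attempt} genuinely applies to the intermediate exponent $k'=2-k/n$, which is why we need $n\ge 2$) and the constant bookkeeping for $1+C_2$, where the assumption $C_1\le 1$ is precisely what lets the stray additive $1$ be absorbed into the main term. One further observes that the new exponent satisfies $\frac{2n-2}{n}+\frac{k}{n^2}\le\frac{2n}{n+1}$ for $0\le k\le\frac{2n}{n+1}$ (with equality at $k=\frac{2n}{n+1}$), and the new constant is again $\le 1$, so Lemma~\ref{Lem-iterSchem} may be iterated; this is the mechanism that eventually drives the exponent up to the optimal value $\frac{2n}{n+1}$ in the proof of Theorem~\ref{thm-main-1}, though that lies beyond the present statement.
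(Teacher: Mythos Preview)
Your proof is correct and follows essentially the same route as the paper: apply Lemma~\ref{Lem-Low2High-attempt} to obtain the upper bound with $C_2=C_1^{-1/n}\bigl(\tfrac{n+1}{n-1}\bigr)^{1/n}\tfrac{n+1}{2n}$ and exponent $k'=2-k/n$, feed this into Lemma~\ref{Lem-High2Low-attempt}, and then simplify the constant using $C_1\le 1$. Your bookkeeping for the inequality $1+C_2\le 2C_1^{-1/n}\bigl(\tfrac{n+1}{n-1}\bigr)^{1/n}\tfrac{n+1}{n}$ is in fact a bit more explicit than the paper's, which asserts this bound directly; the closing remarks on iterability are also accurate and anticipate the proof of Proposition~\ref{Prop-BD-separated}.
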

\begin{proof}
  By the result in Lemma \ref{Lem-Low2High-attempt}, $\varphi$ satisfies upper bound estimate \eqref{equ-temp2}. Consequently, $\varphi$ satisfies the condition \eqref{equ-upperBD} in Lemma \ref{Lem-High2Low-attempt} with
  \[
  C_2=C_1^{-\frac{1}{n}}\left(\dfrac{n+1}{n-1}\right)^{\frac{1}{n}}\cdot \dfrac{n+1}{2n}\quad\text{and}\quad k'=\dfrac{2n-k}{n}<2\leq n.
  \]
  By Lemma \ref{Lem-High2Low-attempt}, we have
  \[
  \varphi(r)\geq \dfrac{1}{2}\left(1+C_1^{-\frac{1}{n}}\left(\dfrac{n+1}{n-1}\right)^{\frac{1}{n}}\cdot \dfrac{n+1}{2n}\right)^{-\frac{1}{n}} r^{\frac{2n^2-2n+k}{n^2}},\quad\forall~s\geq 0.
  \]
  Since $0<C_1\leq 1$ and $n\geq 2$, the coefficient satisfies
  \[
  \begin{array}{llll}
  \displaystyle \dfrac{1}{2}\left(1+C_1^{-\frac{1}{n}}\left(\dfrac{n+1}{n-1}\right)^{\frac{1}{n}}\cdot \dfrac{n+1}{2n}\right)^{-\frac{1}{n}}&
  \geq & \dfrac{1}{2}\left(2C_1^{-\frac{1}{n}}\left(\dfrac{n+1}{n-1}\right)^{\frac{1}{n}}\cdot \dfrac{n+1}{n}\right)^{-\frac{1}{n}}\\
  &=& 2^{-\frac{n+1}{n}} \left(\dfrac{n+1}{n-1}\right)^{-\frac{1}{n^2}}\cdot
  \left(\dfrac{n+1}{n}\right)^{-\frac{1}{n}} C_1^{\frac{1}{n^2}}.
  \end{array}
  \]
  This finishes the proof of this lemma.
\end{proof}

Applying the iteration scheme in Lemma \ref{Lem-iterSchem} repeatedly, we prove the following estimate.
\begin{proposition}\label{Prop-BD-separated}
  Let $\varphi$ be the monotone non-decreasing solution to \eqref{equ-Sys-InteForm}. Then there exist positive dimensional constants $C_3$ and $ C_4$ such that
  \begin{equation}\label{equ-PoweEst}
  C_3r^{\frac{2n}{n+1}}\leq \varphi(r)\leq 1+C_4r^{\frac{2n}{n+1}},\quad\forall~r\geq 0.
  \end{equation}
\end{proposition}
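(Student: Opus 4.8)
(Proposal.) The plan is to bootstrap from the trivial estimate $\varphi\ge 1$ by iterating Lemma \ref{Lem-iterSchem}, and then to deduce the companion upper bound from a single application of Lemma \ref{Lem-Low2High-attempt}. Since $\varphi(r)\ge 1=1\cdot r^{0}$ for all $r\ge 0$, the lower bound \eqref{equ-lowerBD} holds with $C_{1}=1$, $k=0$. Writing $\gamma_{n}:=2^{-\frac{n+1}{n}}\left(\frac{n+1}{n-1}\right)^{-\frac{1}{n^{2}}}\left(\frac{n+1}{n}\right)^{-\frac{1}{n}}$ for the dimensional constant produced by Lemma \ref{Lem-iterSchem}, I would define sequences $k_{0}:=0$, $C_{0}:=1$ and
\[
k_{j+1}:=\frac{2n-2}{n}+\frac{k_{j}}{n^{2}},\qquad
C_{j+1}:=\gamma_{n}\,C_{j}^{1/n^{2}},\qquad j\ge 0,
\]
and prove by induction that Lemma \ref{Lem-iterSchem} applies at every stage, so that
\[
\varphi(r)\ge C_{j}\,r^{k_{j}},\qquad\forall\,r\ge 0,\ \forall\,j\ge 0 .
\]

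The induction amounts to keeping the parameters in the admissible ranges $0<C_{j}\le 1$ and $0\le k_{j}\le\frac{2n}{n+1}$. For the exponents, the decisive point — indeed the only place where the number $\frac{2n}{n+1}$ enters — is that $k\mapsto\frac{2n-2}{n}+\frac{k}{n^{2}}$ is an increasing affine map with slope $\frac{1}{n^{2}}<1$ whose unique fixed point is exactly $\frac{2n}{n+1}$; hence, starting from $k_{0}=0$, the sequence $\{k_{j}\}$ increases strictly, stays below $\frac{2n}{n+1}$, and converges to it. For the constants, $\gamma_{n}<1$ gives $C_{j+1}\le\gamma_{n}\,C_{j}^{1/n^{2}}\le 1$ whenever $C_{j}\le 1$, so $0<C_{j}\le 1$ for all $j$; taking logarithms turns the $C$-recursion into an affine one with the same contraction factor $\frac{1}{n^{2}}$, whence $C_{j}\to C^{\ast}:=\gamma_{n}^{\,n^{2}/(n^{2}-1)}>0$.

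Next I would pass to the limit in $\varphi(r)\ge C_{j}r^{k_{j}}$: for each fixed $r\ge 0$, continuity of $x\mapsto r^{x}$ gives $C_{j}r^{k_{j}}\to C^{\ast}r^{\frac{2n}{n+1}}$ as $j\to\infty$, and therefore $\varphi(r)\ge C^{\ast}r^{\frac{2n}{n+1}}$, which is the lower bound in \eqref{equ-PoweEst} with $C_{3}:=C^{\ast}$. Finally, feeding this lower bound into Lemma \ref{Lem-Low2High-attempt} with $C_{1}=C_{3}$ and $k=\frac{2n}{n+1}$ (admissible, since $0\le\frac{2n}{n+1}\le\frac{2n}{n+1}$) and using $2-\frac{1}{n}\cdot\frac{2n}{n+1}=\frac{2n}{n+1}$ yields
\[
\varphi(r)\le 1+C_{4}\,r^{\frac{2n}{n+1}},\qquad
C_{4}:=C_{3}^{-\frac{1}{n}}\left(\frac{n+1}{n-1}\right)^{\frac{1}{n}}\frac{n+1}{2n},
\]
for all $r\ge 0$, completing the proof.

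The main obstacle is entirely bookkeeping: one must verify that the two coupled sequences $\{k_{j}\}$ and $\{C_{j}\}$ never leave the ranges required by Lemma \ref{Lem-iterSchem}, and that their limits are $\frac{2n}{n+1}$ and a strictly positive number, respectively. Both facts are immediate once one notices that $\frac{2n}{n+1}$ is the attracting fixed point of the exponent recursion and that $\gamma_{n}<1$ keeps the constants bounded; no individual estimate is delicate, but the monotonicity and convergence of the iteration should be spelled out with care.
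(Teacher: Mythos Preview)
Your proposal is correct and follows essentially the same approach as the paper's own proof: iterate Lemma~\ref{Lem-iterSchem} starting from the trivial bound $\varphi\ge 1$, identify $\tfrac{2n}{n+1}$ as the attracting fixed point of the affine exponent recursion, pass to the limit in both the exponent and the constant, and then obtain the upper bound by a single application of Lemma~\ref{Lem-Low2High-attempt}. If anything, your version is slightly more explicit about verifying the admissibility conditions $0<C_j\le 1$ and $0\le k_j\le\tfrac{2n}{n+1}$ at each step, which the paper leaves implicit.
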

\begin{proof}
  Starting from
  \[
  \varphi(r)\geq 1,\quad\forall~r\geq 0,
  \]
  the result in Lemma \ref{Lem-iterSchem} implies
  \[
  \varphi(r)\geq p_1r^{k_1},\quad\forall~r\geq 0,
  \]
  where
  \[
  p_1:=2^{-\frac{n+1}{n}} \left(\dfrac{n+1}{n-1}\right)^{-\frac{1}{n^2}}\cdot
  \left(\dfrac{n+1}{n}\right)^{-\frac{1}{n}} \quad\text{and}\quad
  k_1:=\dfrac{2n-2}{n}.
  \]
  By iteration, we have two constant sequences $\{p_m\}_{m=1}^{\infty}$ and $\{k_m\}_{m=1}^{\infty}$ such that
  \[
  \varphi(r)\geq p_mr^{k_m},\quad\forall~r\geq 0,
  \]
  where
  \[
  p_m:=p_1\cdot p_{m-1}^{\frac{1}{n^2}}\quad\text{and}\quad k_m:=\dfrac{2n-2}{n}+\dfrac{k_{m-1}}{n^2},\quad\forall~m=2,3,\cdots.
  \]
  By a direct computation,
  \[
  p_m=p_1^{1+\frac{1}{n^2}+\cdots+\frac{1}{n^{2(m-1)}}}\rightarrow p_1^{\frac{n^2}{n^2-1}}
  \]
  and
  \[
  k_m=\dfrac{2n-2}{n}\left(1+\dfrac{1}{n^2}+\cdots+\dfrac{1}{n^{2(m-1)}}\right)\rightarrow
  \dfrac{2n}{n+1}
  \]
  as $m\rightarrow\infty.$ This proves the lower bound estimate by sending $m\rightarrow\infty$,
  \[
  \varphi(r)\geq 2^{-\frac{n}{n-1}}\left(\dfrac{n+1}{n-1}\right)^{-\frac{1}{n^2-1}}
  \left(\dfrac{n+1}{n}\right)^{-\frac{n}{n^2-1}}r^{\frac{2n}{n+1}},\quad\forall~r\geq 0.
  \]
  Applying Lemma \ref{Lem-Low2High-attempt}, we similarly have the upper bound estimate of $\varphi$, where the coefficient relies only on dimension $n$.
\end{proof}

Now we are ready to prove the asymptotics of $\varphi$ based on the results in Proposition \ref{Prop-BD-separated}. In the language of $\varphi$, Theorem \ref{thm-main-1} is a direct consequence of the following theorem.
\begin{theorem}\label{thm-main-2}
  Let $\varphi$ be the monotone non-decreasing solution to \eqref{equ-Sys-InteForm}. Then
  \[
  \lim_{r\rightarrow\infty}\dfrac{\varphi(r)}{r^{\frac{2n}{n+1}}}=
    \lim_{r\rightarrow\infty}\dfrac{\varphi'(r)}{\frac{2n}{n+1}r^{\frac{n-1}{n+1}}}=
  \lim_{r\rightarrow\infty}\dfrac{\varphi''(r)}{\frac{2n(n-1)}{(n+1)^2}r^{\frac{-2}{n+1}}}=
  \dfrac{n+1}{(2n)^{\frac{n}{n+1}}(n-1)^{\frac{1}{n+1}}}.
  \]
\end{theorem}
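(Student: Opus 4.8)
The plan is to bootstrap from the crude two-sided power bound \eqref{equ-PoweEst} to the exact asymptotic constant by a continuity/monotonicity argument on a suitably rescaled quantity, and then transfer the information to $\varphi'$ and $\varphi''$ through the ODE. Set $L := \frac{n+1}{(2n)^{n/(n+1)}(n-1)^{1/(n+1)}}$, the conjectured limit, and note that $L$ is precisely the value for which the pure power function $\Phi(r) = L r^{2n/(n+1)}$ solves $\Phi \cdot (\Phi')^{n-1}\cdot\Phi'' = r^{n-1}$ exactly. The first step is to introduce the logarithmic-type variable $\ell(r) := \varphi(r)/r^{2n/(n+1)}$ (for $r \geq 1$, say), and to rewrite the integral equation \eqref{equ-Sys-InteForm} as a closed relation for $\ell$. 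Since $\varphi(s) = s^{2n/(n+1)}\ell(s)$, we get $(\varphi'(r))^n = n\int_0^r s^{n-1-2n/(n+1)}\ell(s)^{-1}\,ds = n\int_0^r s^{(n^2-n-2n)/(n+1)}\ell(s)^{-1}\,ds$; the exponent $\frac{n^2-n-2n}{n+1} = \frac{n^2-3n}{n+1}$ wait — more carefully, $n-1-\frac{2n}{n+1} = \frac{(n-1)(n+1)-2n}{n+1} = \frac{n^2-2n-1}{n+1}$, which exceeds $-1$ precisely when $n \geq 2$, so the integral converges and is dominated by its behavior near $r$.

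The key step is then an averaging/Cesàro argument: from \eqref{equ-PoweEst} we know $C_3 \leq \ell(r) \leq 1 + C_4$ is bounded above and below away from zero for large $r$. Define $m := \liminf_{r\to\infty}\ell(r)$ and $M := \limsup_{r\to\infty}\ell(r)$; both are finite and positive. Feeding $\ell \geq m - \varepsilon$ into the integral expression for $(\varphi')^n$ and integrating once more (using $\varphi(r) = 1 + \int_0^r \varphi'$) produces, after extracting the leading power of $r$, an upper bound $\limsup \ell(r) \leq g(m)$ for an explicit decreasing function $g$; symmetrically $\liminf \ell(r) \geq g(M)$. Since the profile must be consistent, $M \leq g(m)$ and $m \geq g(M)$, and because $g$ is a contraction-type map with unique fixed point $L$ on the relevant interval, one forces $m = M = L$. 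This is the step I expect to be the main obstacle: one must be careful that the leading-order extraction from the nested integrals is uniform, i.e. that the lower-order remainder terms (the "$1+$" in $\varphi(0)=1$, and the contributions of the integral near $s=0$ where $\ell$ is not yet controlled) are genuinely $o(r^{2n/(n+1)})$, which needs the exponent inequality $n^2-2n-1 > -(n+1)$, equivalently $n \geq 2$, together with the a priori bound from Proposition \ref{Prop-BD-separated}.

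Once $\lim_{r\to\infty}\varphi(r)/r^{2n/(n+1)} = L$ is established, the asymptotics of $\varphi'$ follow directly: from $(\varphi'(r))^n = n\int_0^r s^{n-1}/\varphi(s)\,ds$ and the substitution $\varphi(s) \sim L s^{2n/(n+1)}$, L'Hôpital (or the elementary fact that $\int_0^r s^\alpha\,ds \sim r^{\alpha+1}/(\alpha+1)$ when $\alpha > -1$) gives $(\varphi'(r))^n \sim \frac{n}{L}\cdot\frac{n+1}{n^2-n-1}\, r^{(n^2-2n-1)/(n+1)}$ — one then checks the arithmetic collapses to $(\varphi'(r))^n \sim \bigl(\tfrac{2n}{n+1}L\bigr)^n r^{n(n-1)/(n+1)}$, i.e. $\varphi'(r) \sim \tfrac{2n}{n+1}L\, r^{(n-1)/(n+1)}$. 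Finally, rewriting the differentiated equation \eqref{equ-Sys-origin} as $\varphi''(r) = r^{n-1}/(\varphi(r)(\varphi'(r))^{n-1})$ and plugging in the two asymptotics just obtained yields $\varphi''(r) \sim \tfrac{2n(n-1)}{(n+1)^2}L\, r^{-2/(n+1)}$ after simplification. The statement about $w(t)$ in Theorem \ref{thm-main-1} is immediate from the explicit formula $w(t) = (-(n+1)t)^{1/(n+1)}$ recorded in Section \ref{sec-Existence}, and the ratio $u/u_0 \to 1$ is just the product of the $w$-identity with the $\varphi$-limit against the explicit constant in $u_0$.
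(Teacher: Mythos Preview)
Your approach is essentially identical to the paper's: the paper also sets $\underline K=\liminf \varphi/r^{2n/(n+1)}$ and $\overline K=\limsup \varphi/r^{2n/(n+1)}$, proves the two cross-inequalities $\underline K\ge g(\overline K)$ and $\overline K\le g(\underline K)$ with the explicit decreasing map $g(x)=x^{-1/n}\bigl(\tfrac{n+1}{n-1}\bigr)^{1/n}\tfrac{n+1}{2n}$ (your unspecified $g$), iterates once to force $\underline K=\overline K=L$, and then reads off $\varphi'$ via L'H\^opital on the integral form and $\varphi''$ from the ODE. Your write-up has several arithmetic slips (e.g.\ the exponent after integrating $s^{(n^2-2n-1)/(n+1)}$ should be $(n^2-n)/(n+1)$, not $(n^2-2n-1)/(n+1)$, and the denominator $n^2-n-1$ should be $n^2-n$), and the ``contraction'' step should be made explicit as $g\circ g(x)=c^{(n-1)/n}x^{1/n^2}$ so that $M\le g(g(M))$ forces $M\le L$ directly; but the structure is the same and correct.
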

By Theorem \ref{thm-main-2}, there are
two positive constants  $\overline K$ and $\underline K$  such that
\[
\overline K:=\limsup_{r\rightarrow\infty}\dfrac{\varphi(r)}{r^{\frac{2n}{n+1}}}
\geq \liminf_{r\rightarrow\infty}\dfrac{\varphi(r)}{r^{\frac{2n}{n+1}}}=:\underline K.
\]
\begin{lemma}\label{Lem-High2Low-coef}
  Let $\varphi$ be the positive non-decreasing solution to \eqref{equ-Sys-InteForm}. Then
  \[
  \underline K\geq \overline K^{-\frac{1}{n}}\left( \dfrac{n+1}{n-1}\right)^{\frac{1}{n}}\cdot \frac{n+1}{2n}.
  \]
\end{lemma}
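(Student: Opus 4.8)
The plan is to upgrade the argument of Lemma \ref{Lem-Low2High-attempt} so that it tracks the sharp constant instead of a crude one: a bound on the $\limsup$ of $\varphi/r^{2n/(n+1)}$ feeds through the integral equation \eqref{equ-Sys-InteForm} to a bound on the $\liminf$. First I would fix $\varepsilon>0$. By the definition of $\overline K$ there is $r_0=r_0(\varepsilon)\ge 1$ such that $\varphi(s)\le(\overline K+\varepsilon)\,s^{\frac{2n}{n+1}}$ for all $s\ge r_0$, while on $[0,r_0]$ we only use $\varphi\ge 1$, so that $n\int_0^{r_0}s^{n-1}/\varphi(s)\,\mathrm ds\le r_0^{\,n}=:B_0$ is a finite constant independent of $r$.

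Next I would insert this upper bound into the equation in \eqref{equ-Sys-InteForm}. Splitting $\int_0^r=\int_0^{r_0}+\int_{r_0}^r$ and using the tail estimate,
\[
(\varphi'(r))^n=n\int_0^r\frac{s^{n-1}}{\varphi(s)}\,\mathrm ds\ge \frac{n}{\overline K+\varepsilon}\int_{r_0}^r s^{\,n-1-\frac{2n}{n+1}}\,\mathrm ds ,\qquad r>r_0 .
\]
Since $n-1-\frac{2n}{n+1}=\frac{n(n-1)}{n+1}-1>-1$ (here $n\ge 2$), the tail integral equals $\frac{n+1}{n(n-1)}\big(r^{\frac{n(n-1)}{n+1}}-r_0^{\frac{n(n-1)}{n+1}}\big)$, so $(\varphi'(r))^n\ge \frac{n+1}{(n-1)(\overline K+\varepsilon)}\,r^{\frac{n(n-1)}{n+1}}-B_1$ for a constant $B_1=B_1(\varepsilon)$. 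Because the exponent $\frac{n(n-1)}{n+1}$ is positive, for any $\delta>0$ there is $r_1=r_1(\varepsilon,\delta)\ge r_0$ with
\[
\varphi'(r)\ge\Big((1-\delta)\,\frac{n+1}{(n-1)(\overline K+\varepsilon)}\Big)^{\frac1n} r^{\frac{n-1}{n+1}},\qquad\forall\,r\ge r_1 .
\]

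Finally I would integrate this inequality from $r_1$ to $r$; since $\frac{n-1}{n+1}+1=\frac{2n}{n+1}$,
\[
\varphi(r)\ge\varphi(r_1)+\Big((1-\delta)\,\frac{n+1}{(n-1)(\overline K+\varepsilon)}\Big)^{\frac1n}\cdot\frac{n+1}{2n}\Big(r^{\frac{2n}{n+1}}-r_1^{\frac{2n}{n+1}}\Big),
\]
and dividing by $r^{\frac{2n}{n+1}}$ and passing to the liminf gives $\underline K\ge\big((1-\delta)\frac{n+1}{(n-1)(\overline K+\varepsilon)}\big)^{\frac1n}\cdot\frac{n+1}{2n}$. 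Letting first $\delta\to 0$ and then $\varepsilon\to 0$ yields the claim $\underline K\ge\overline K^{-\frac1n}\big(\frac{n+1}{n-1}\big)^{\frac1n}\cdot\frac{n+1}{2n}$. The only real care needed is bookkeeping — isolating the harmless bounded head $\int_0^{r_0}$ and absorbing the lower-order error terms through the two successive ``for all large $r$'' reductions — and the exponent arithmetic $n-1-\frac{2n}{n+1}\mapsto\frac{n(n-1)}{n+1}$ and $\frac{n-1}{n+1}\mapsto\frac{2n}{n+1}$, which makes both passages to the limit legitimate precisely because these powers are positive for $n\ge 2$; I expect this bookkeeping, rather than any conceptual difficulty, to be the main (mild) obstacle.
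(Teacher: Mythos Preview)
Your proposal is correct and follows essentially the same approach as the paper: fix $\varepsilon>0$, use the definition of $\overline K$ to bound $\varphi$ above on a tail, feed this through the integral equation \eqref{equ-Sys-InteForm} to obtain a lower bound on $\varphi'$, integrate, divide by $r^{2n/(n+1)}$, and pass to the limit. The only cosmetic differences are that the paper absorbs the lower-order errors by replacing $\overline K+\varepsilon$ with $\overline K+2\varepsilon$ (rather than your auxiliary $\delta$), and that your reference to Lemma~\ref{Lem-Low2High-attempt} should really be to Lemma~\ref{Lem-High2Low-attempt}, since you are passing from an upper bound on $\varphi$ to a lower bound; also the constant $B_0$ you introduce is never actually needed, since the head integral is nonnegative and can simply be dropped for the lower bound.
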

\begin{proof}
  For any $\epsilon>0$, from the definition of $\overline K$, there exists $R\gg 1$ such that
  \[
  \dfrac{\varphi(r)}{r^{\frac{2n}{n+1}}}\leq\overline K+\epsilon,\quad\forall~r\geq R.
  \]
  Consequently, there exist dimensional constants $C>0$ independent of $r>R$ such that,
  \[
  \begin{array}{lllll}
    (\varphi'(r))^n &=& \displaystyle n\int_0^r\dfrac{s^{n-1}}{\varphi(s)}\mathrm ds\\
    &\geq & -C+\displaystyle \dfrac{1}{\overline K+\epsilon}n\int_R^rs^{n-1-\frac{2n}{n+1}}\mathrm ds\\
    &\geq & -C+\displaystyle \dfrac{1}{\overline K+\epsilon}\cdot \dfrac{n+1}{n-1}\left(r^{\frac{n^2-n}{n+1}}-R^{\frac{n^2-n}{n+1}}\right)\\
    &\geq &\displaystyle
     \dfrac{1}{\overline K+\epsilon}\cdot \dfrac{n+1}{n-1}r^{\frac{n^2-n}{n+1}}-\dfrac{1}{\overline K+\epsilon}\cdot \dfrac{n+1}{n-1}R^{\frac{n^2-n}{n+1}}-C.\\
  \end{array}
  \]
Furthermore, there exists $R'>R$ such that for all $r>R'$,
\[
\begin{array}{lll}
  (\varphi'(r))^n &\geq & \displaystyle \dfrac{1}{\overline K+2\epsilon}\cdot \dfrac{n+1}{n-1}r^{\frac{n^2-n}{n+1}},\\
  \varphi(r) &\geq &\displaystyle \varphi(R')+ \left(\dfrac{1}{\overline K+2\epsilon}\cdot \dfrac{n+1}{n-1}\right)^{\frac{1}{n}}\cdot \frac{n+1}{2n} \left(r^{\frac{2n}{n+1}}-R'^{\frac{2n}{n+1}}\right)\\
  &=&\displaystyle  \left(\dfrac{1}{\overline K+2\epsilon}\cdot \dfrac{n+1}{n-1}\right)^{\frac{1}{n}}\cdot \frac{n+1}{2n}r^{\frac{2n}{n+1}}
  -\left(\dfrac{1}{\overline K+2\epsilon}\cdot \dfrac{n+1}{n-1}\right)^{\frac{1}{n}}\cdot \frac{n+1}{2n}R'^{\frac{2n}{n+1}}+\varphi(R').
\end{array}
\]
Sending $r\rightarrow\infty$, we have
\[
\underline K=\liminf_{r\rightarrow\infty}\dfrac{\varphi(r)}{r^{\frac{2n}{n+1}}}
\geq \left(\dfrac{1}{\overline K+2\epsilon}\cdot \dfrac{n+1}{n-1}\right)^{\frac{1}{n}}\cdot \frac{n+1}{2n}.
\]
Since $\epsilon>0$ is an arbitrary constant, the desired result follows by sending $\epsilon\rightarrow 0^+.$
\end{proof}

\begin{lemma}\label{Lem-Low2High-coef}
  Let $\varphi$ be the positive non-decreasing solution to \eqref{equ-Sys-InteForm}. Then
  \[
  \overline K\leq \underline K^{-\frac{1}{n}}\left( \dfrac{n+1}{n-1}\right)^{\frac{1}{n}}\cdot \frac{n+1}{2n}.
  \]
\end{lemma}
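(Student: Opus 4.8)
The plan is to run the argument of Lemma~\ref{Lem-High2Low-coef} backwards. There, an upper control of $\varphi/r^{\frac{2n}{n+1}}$ (the constant $\overline K$) was inserted into the integral identity $(\varphi'(r))^n=n\int_0^r s^{n-1}/\varphi(s)\,\mathrm ds$ from \eqref{equ-Sys-InteForm} to extract a lower control. Since the right-hand side of that identity is monotone \emph{decreasing} in $\varphi$, a lower control of $\varphi$ will instead produce an upper control of $(\varphi'(r))^n$, which after one integration gives an upper control of $\varphi$ itself, hence a bound on $\overline K$. Note at the outset that $\underline K>0$ and $\overline K<\infty$ by Proposition~\ref{Prop-BD-separated}, so both quantities are genuine positive reals.

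Concretely: fix $\epsilon>0$ small enough that $\underline K-\epsilon>0$. By definition of $\underline K$ there is $R\gg 1$ with $\varphi(r)\geq(\underline K-\epsilon)r^{\frac{2n}{n+1}}$ for all $r\geq R$. Splitting $n\int_0^r s^{n-1}/\varphi(s)\,\mathrm ds$ at $R$ and bounding the piece over $[0,R]$ by $n\int_0^R s^{n-1}\,\mathrm ds=R^n$ (using $\varphi\geq 1$), one gets for $r>R$
\[
(\varphi'(r))^n\leq R^n+\dfrac{1}{\underline K-\epsilon}\,n\int_R^r s^{\,n-1-\frac{2n}{n+1}}\,\mathrm ds
=R^n+\dfrac{1}{\underline K-\epsilon}\cdot\dfrac{n+1}{n-1}\Big(r^{\frac{n^2-n}{n+1}}-R^{\frac{n^2-n}{n+1}}\Big).
\]
Hence there is $R'>R$ beyond which the additive constants are negligible and
$\varphi'(r)\leq\big(\tfrac{1}{\underline K-2\epsilon}\cdot\tfrac{n+1}{n-1}\big)^{\frac1n}r^{\frac{n-1}{n+1}}$; integrating from $R'$ to $r$ and using $\int r^{\frac{n-1}{n+1}}\,\mathrm dr=\tfrac{n+1}{2n}r^{\frac{2n}{n+1}}$ yields
\[
\varphi(r)\leq\varphi(R')+\Big(\dfrac{1}{\underline K-2\epsilon}\cdot\dfrac{n+1}{n-1}\Big)^{\frac1n}\cdot\dfrac{n+1}{2n}\Big(r^{\frac{2n}{n+1}}-R'^{\frac{2n}{n+1}}\Big).
\]
Dividing by $r^{\frac{2n}{n+1}}$, taking $\limsup_{r\to\infty}$ and then letting $\epsilon\to0^+$ gives $\overline K\leq\underline K^{-\frac1n}\big(\tfrac{n+1}{n-1}\big)^{\frac1n}\cdot\tfrac{n+1}{2n}$, as claimed.

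This is essentially routine once Lemma~\ref{Lem-High2Low-coef} is available, so I do not anticipate a real obstacle. The only points needing care are that the lower bound $\varphi(r)\geq(\underline K-\epsilon)r^{\frac{2n}{n+1}}$ holds only for $r\geq R$, which forces one to split off and absorb a bounded contribution from $[0,R]$, and the bookkeeping by which $\epsilon$ is relaxed to $2\epsilon$ when the lower-order additive constants are discarded before taking $n$-th roots. Combined with Lemma~\ref{Lem-High2Low-coef}, the two inequalities pin down $\overline K=\underline K$ and their common value, which (together with the resulting first- and second-order asymptotics of $\varphi'$ and $\varphi''$ via the equation) will complete the proof of Theorem~\ref{thm-main-2} and hence Theorem~\ref{thm-main-1}.
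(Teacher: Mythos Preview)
Your proof is correct and follows essentially the same route as the paper's own argument: insert the lower bound $\varphi(r)\geq(\underline K-\epsilon)r^{\frac{2n}{n+1}}$ for $r\geq R$ into the integral identity, absorb the bounded contribution from $[0,R]$, relax $\epsilon$ to $2\epsilon$ to discard lower-order terms, integrate once, and pass to the limit. The only cosmetic difference is that you make the constant $C=R^n$ explicit using $\varphi\geq 1$, whereas the paper leaves it as an unspecified dimensional constant.
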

\begin{proof}
  For any sufficiently small $\epsilon>0$, from the definition of $\underline K$, there exists $R\gg 1$ such that
  \[
  \dfrac{\varphi(r)}{r^{\frac{2n}{n+1}}}\geq\underline K-\epsilon,\quad\forall~r\geq R.
  \]
  Consequently, there exist dimensional constant $C>0$ independent of $r>R$ such that,
  \[
  \begin{array}{llll}
  (\varphi'(r))^n &=& \displaystyle n\int_0^r\dfrac{s^{n-1}}{\varphi(s)}\mathrm ds\\
  &\leq &\displaystyle C+\dfrac{1}{\underline K-\epsilon} n\int_R^rs^{n-1-\frac{2n}{n+1}}\mathrm ds\\
  &\leq &\displaystyle C+\dfrac{1}{\underline K-\epsilon}\cdot \dfrac{n+1}{n-1}\left(r^{\frac{n^2-n}{n+1}}-R^{\frac{n^2-n}{n+1}}\right)\\
  &\leq &\displaystyle
  \dfrac{1}{\underline K-\epsilon}\cdot \dfrac{n+1}{n-1}r^{\frac{n^2-n}{n+1}}
  -\dfrac{1}{\underline K-\epsilon}\cdot \dfrac{n+1}{n-1}R^{\frac{n^2-n}{n+1}}+C.
  \end{array}
  \]
  Furthermore, there exists $R'>R$ such that for all $r>R'$,
  \[
  \begin{array}{lll}
  (\varphi'(r))^n &\leq & \dfrac{1}{\underline K-2\epsilon}\cdot \dfrac{n+1}{n-1}r^{\frac{n^2-n}{n+1}},\\
    \varphi(r)&\leq &\displaystyle \varphi(R')+ \left(\dfrac{1}{\underline K-2\epsilon}\cdot \dfrac{n+1}{n-1} \right)^{\frac{1}{n}}\cdot \dfrac{n+1}{2n} \left(r^{\frac{2n}{n+1}}-R'^{\frac{2n}{n+1}}\right)\\
    &=& \left(\dfrac{1}{\underline K-2\epsilon}\cdot \dfrac{n+1}{n-1} \right)^{\frac{1}{n}}\cdot \dfrac{n+1}{2n}r^{\frac{2n}{n+1}}-
    \left(\dfrac{1}{\underline K-2\epsilon}\cdot \dfrac{n+1}{n-1} \right)^{\frac{1}{n}}\cdot \dfrac{n+1}{2n}R'^{\frac{2n}{n+1}}+\varphi(R').
  \end{array}
  \]
  Sending $r\rightarrow\infty$, we have
  \[
  \overline K=\limsup_{r\rightarrow\infty}\dfrac{\varphi(r)}{r^{\frac{2n}{n+1}}}
\leq \left(\dfrac{1}{\underline K-2\epsilon}\cdot \dfrac{n+1}{n-1}\right)^{\frac{1}{n}}\cdot \frac{n+1}{2n}.
  \]
  Since $\epsilon>0$ is an arbitrary constant, the desired result follows by sending $\epsilon\rightarrow 0^+.$
\end{proof}

\begin{proof}[Proof of Theorem \ref{thm-main-2}]
  Combining Lemmas \ref{Lem-High2Low-coef} and \ref{Lem-Low2High-coef}, we have
  \[
  \begin{array}{llll}
  \overline K&\leq&\displaystyle \underline K^{-\frac{1}{n}}\left( \dfrac{n+1}{n-1}\right)^{\frac{1}{n}}\cdot \frac{n+1}{2n}\\
  &\leq&\displaystyle \left(\overline K^{-\frac{1}{n}}\left( \dfrac{n+1}{n-1}\right)^{\frac{1}{n}}\cdot \frac{n+1}{2n}\right)^{-\frac{1}{n}}\left( \dfrac{n+1}{n-1}\right)^{\frac{1}{n}}\cdot \frac{n+1}{2n}\\
  &=&\displaystyle \overline K^{\frac{1}{n^2}}
  \left( \dfrac{n+1}{n-1}\right)^{\frac{n-1}{n^2}}\cdot \left(\frac{n+1}{2n}\right)^{\frac{n-1}{n}}\\
  \end{array}
  \]
  and
  \[
  \begin{array}{lll}
    \underline K&\geq &\displaystyle \overline K^{-\frac{1}{n}}\left( \dfrac{n+1}{n-1}\right)^{\frac{1}{n}}\cdot \frac{n+1}{2n}\\
    &\geq &\displaystyle
    \left(\underline K^{-\frac{1}{n}}\left( \dfrac{n+1}{n-1}\right)^{\frac{1}{n}}\cdot \frac{n+1}{2n}\right)^{-\frac{1}{n}}\left( \dfrac{n+1}{n-1}\right)^{\frac{1}{n}}\cdot \frac{n+1}{2n}\\
    &=&\displaystyle \underline K^{\frac{1}{n^2}}
  \left( \dfrac{n+1}{n-1}\right)^{\frac{n-1}{n^2}}\cdot \left(\frac{n+1}{2n}\right)^{\frac{n-1}{n}}.\\
  \end{array}
  \]
  By a direct computation,
  \[
  \dfrac{n+1}{(2n)^{\frac{n}{n+1}}(n-1)^{\frac{1}{n+1}}}\leq \underline K\leq\overline K\leq \dfrac{n+1}{(2n)^{\frac{n}{n+1}}(n-1)^{\frac{1}{n+1}}}.
  \]
  This finishes the proof of the asymptotic behavior of $\varphi$ at infinity. By using the integral form \eqref{equ-Sys-InteForm} and the L'Hospital's rule,
  \[
  \lim_{r\rightarrow\infty}\dfrac{(\varphi'(r))^n}{r^{\frac{n^2-n}{n+1}}}=
  \lim_{r\rightarrow\infty}\dfrac{\displaystyle n\int_0^r\dfrac{s^{n-1}}{\varphi(s)}\mathrm ds}{r^{\frac{n^2-n}{n+1}}}=\dfrac{n+1}{n-1}\lim_{r\rightarrow\infty}
  \dfrac{r^{\frac{2n}{n+1}}}{\varphi(r)}=\left(\frac{2n}{n-1}\right)^{\frac{n}{n+1}}.
  \]
  Furthermore, by using the equation in \eqref{equ-Sys-origin}, we obtain the asymptotic behavior of $\varphi''$ as below
  \[
  \lim_{r\rightarrow\infty}\dfrac{\varphi''(r)}{r^{\frac{-2}{n+1}}}
  =\lim_{r\rightarrow\infty}\dfrac{r^{\frac{2n}{n+1}}\cdot r^{\frac{(n-1)^2}{n+1}}}{\varphi(r)\cdot (\varphi'(r))^{n-1}}=\dfrac{(2n)^{\frac{1}{n+1}}(n-1)^{\frac{n}{n+1}}}{n+1}.
  \]
  This finishes the proof of Theorem \ref{thm-main-2}.
\end{proof}

\section{Refined asymptotic behavior at infinity}\label{sec-RefinedAsym}

In this section, we analyze higher order asymptotic behavior than the results in Theorem \ref{thm-main-2}.
By linearizing the equation and using asymptotic stability analysis, it is equivalent to prove the following refined asymptotic behavior of $\varphi$ when $n\geq 4$.
\begin{lemma}\label{Lem-Refine-1}
  Let $n\geq 4$ and $\varphi$ be the monotone non-decreasing solution to \eqref{equ-Sys-InteForm}. Then there exists dimensional constant $K_n>0$ given in \eqref{equ-Def-Kn} such that
  \[
  \varphi(r)=
  \dfrac{n+1}{(2n)^{\frac{n}{n+1}}(n-1)^{\frac{1}{n+1}}}r^{\frac{2n}{n+1}}+O(r^{-K_n})\quad\text{as }r\rightarrow\infty.
  \]
\end{lemma}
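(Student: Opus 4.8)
The plan is to recast the ODE \eqref{equ-Sys-origin} as an autonomous second-order equation on a logarithmic time scale and then carry out an asymptotic-stability analysis near its relevant equilibrium. Set $\alpha:=\frac{2n}{n+1}$ and $A:=\frac{n+1}{(2n)^{n/(n+1)}(n-1)^{1/(n+1)}}$, and introduce the intermediate variable $y$ by $\varphi(r)=r^{\alpha}y(t)$ with $t:=\ln r$. A direct computation gives $\varphi'=r^{\alpha-1}(\alpha y+\dot y)$ and $\varphi''=r^{\alpha-2}\bigl[\ddot y+(2\alpha-1)\dot y+\alpha(\alpha-1)y\bigr]$, where $\dot{}$ denotes $d/dt$, and all powers of $r$ cancel, so $\varphi(\varphi')^{n-1}\varphi''=r^{n-1}$ becomes
\[
y\,(\alpha y+\dot y)^{\,n-1}\bigl[\ddot y+(2\alpha-1)\dot y+\alpha(\alpha-1)y\bigr]=1 .
\]
Its constant solution is precisely $y\equiv A$ (this is exactly the algebraic identity $A^{n+1}\alpha^{n}(\alpha-1)=1$ behind the leading coefficient in Theorem \ref{thm-main-2}). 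By Theorem \ref{thm-main-2} we already know $y(t)\to A$ and $\dot y(t)\to0$ as $t\to\infty$, so for $t$ large we sit in a neighbourhood of this equilibrium.

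Next I would linearize. Since $\alpha y+\dot y\to\alpha A>0$, the equation can be solved smoothly for $\ddot y$ near the equilibrium, $\ddot y=H(y,\dot y)$ with $H(A,0)=0$, and Taylor's theorem gives, for $z:=y-A$,
\[
\ddot z+n\dot z+\frac{2n(n-1)}{n+1}\,z=N(z,\dot z),\qquad N(z,\dot z)=O\bigl(z^{2}+\dot z^{2}\bigr),
\]
the linear coefficients $n$ and $\frac{2n(n-1)}{n+1}$ emerging after substituting $\alpha=\frac{2n}{n+1}$. The characteristic polynomial $\lambda^{2}+n\lambda+\frac{2n(n-1)}{n+1}$ has roots with sum $-n<0$ and product $\frac{2n(n-1)}{n+1}>0$, hence both have strictly negative real part; let $\mu<0$ be the larger of the two real parts. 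Its discriminant equals $\frac{n(n^{2}-7n+8)}{n+1}$, which is negative for $n=4,5$ (complex conjugate roots, $\mu=-\tfrac n2$) and positive for $n\ge6$ (real roots, $\mu=\tfrac12\bigl(-n+\sqrt{n^{2}-\tfrac{8n(n-1)}{n+1}}\bigr)$). In both cases the companion matrix $B$ of the linearization satisfies $\|e^{Bt}\|\le Ce^{\mu t}$ for $t\ge0$.

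The heart of the argument is the sharp decay estimate for $Z:=(z,\dot z)$, which solves $\dot Z=BZ+\widetilde N(Z)$ with $\widetilde N=O(|Z|^{2})$. Choosing $T$ large so that $|Z|\le\delta$ on $[T,\infty)$ and combining variation of parameters with Gronwall's inequality, one first obtains $z(t)=O(e^{(\mu+\varepsilon)t})$ for every $\varepsilon>0$. A single bootstrap removes the loss: since $\mu<0$ we have $2(\mu+\varepsilon)<\mu$ for small $\varepsilon$, so $\widetilde N(Z(s))=O(e^{2(\mu+\varepsilon)s})$ is integrable against $e^{-\mu s}$ and, inserted into the Duhamel formula, is dominated by the homogeneous decay; this yields the clean bound $|Z(t)|=O(e^{\mu t})$. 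Translating back, $\varphi(r)=Ar^{\alpha}+r^{\alpha}z(\ln r)=Ar^{\alpha}+O(r^{\alpha+\mu})$, so $K_n=-(\alpha+\mu)$. For $n=4,5$, $\alpha+\mu=\frac{2n}{n+1}-\frac n2=-\frac{n(n-3)}{2(n+1)}$, matching \eqref{equ-Def-Kn}; for $n\ge6$ one squares to check $\tfrac12\sqrt{n^{2}-\tfrac{8n(n-1)}{n+1}}=\tfrac{n}{n+1}\sqrt{\tfrac{(n-3)^{2}}{4}-\tfrac{2(n-1)}{n}}$, which reduces to the identity $\frac{n^{2}}{4}\bigl[(n+1)^{2}-(n-3)^{2}\bigr]=2n^{2}(n-1)$, true because $(n+1)^{2}-(n-3)^{2}=8(n-1)$. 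The hypothesis $n\ge4$ is exactly what keeps $K_n>0$ (for $n=3$ one gets $\alpha+\mu=0$), consistent with Remark \ref{Rem-Dim1Case}.

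The main obstacle is obtaining the decay rate \emph{without} an $\varepsilon$-loss: a plain Gronwall argument only delivers $O(r^{-K_n+\varepsilon})$, and closing the gap requires using that $N$ is genuinely quadratic and that doubling a negative exponent produces a strictly smaller one, so that the nonlinear term in the Duhamel representation is subordinate to --- not merely comparable with --- the linear decay $e^{\mu t}$. A secondary technical point is to justify, uniformly for large $t$, the smoothness of $H$ and the passage to the linearized equation, which relies on the a priori limits $y\to A$, $\dot y\to0$ furnished by Theorem \ref{thm-main-2}.
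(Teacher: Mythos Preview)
Your argument is correct and follows the same strategy as the paper---pass to a logarithmic scale, linearize around the unique equilibrium, and invoke asymptotic stability---but the implementations differ in two useful ways. First, your multiplicative ansatz $\varphi(r)=r^{\alpha}y(\ln r)$ produces a genuinely \emph{autonomous} second-order equation for $y$, whereas the paper introduces the intermediate variable $s=r^{\alpha}$, writes $\Phi(s)=\varphi(r)$, and studies the additive remainder $\Psi=\Phi-c_ns$ on the scale $\tilde t=\ln s$; this leaves $o(1)$ coefficients $\tilde R_1,\tilde R_2$ in the resulting linear-looking equation, which the paper then handles by citing the asymptotic-integration theorems of Coddington--Levinson and Bodine--Lutz. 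Your linearization $\ddot z+n\dot z+\tfrac{2n(n-1)}{n+1}z=N(z,\dot z)$ is the $\alpha$-rescaled companion of the paper's $\lambda^{2}+\tfrac{n-3}{2}\lambda+\tfrac{n-1}{2n}=0$ (the roots are related by $\mu=\alpha(\lambda-1)$), so the two computations of $K_n$ agree. Second, you give a self-contained Gronwall-plus-bootstrap argument to reach the sharp rate $O(e^{\mu t})$, while the paper first obtains a preliminary exponential decay and then upgrades via the $L^1$-perturbation theory in \cite{Book-Bodine-Lutz-AsymptoticIntegration}. Your route is a little more elementary and avoids the intermediate variable $s$; the paper's route makes clearer contact with the classical asymptotic-integration literature.
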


In order to simplify the proof, we introduce an intermediate variable $s(r):=r^{\frac{2n}{n+1}}$.
We rewrite variable separated solution of form \eqref{equ-SepVar-Sol} into
\[
\varphi(r)\cdot w(t)=u(x,t)=\Phi(s(r))\cdot w(t),\quad\forall~r\geq 0,~t\leq 0.
\]
By a direct computation,
\[
\begin{array}{lll}
\varphi'(r)&=&\dfrac{2n}{n+1}\Phi'(s)\cdot r^{\frac{n-1}{n+1}}=
\dfrac{2n}{n+1}\Phi'(s)\cdot s^{\frac{n-1}{2n}},\\
\varphi''(r)&=&\left(\dfrac{2n}{n+1}\right)^2\Phi''(s)\cdot r^{ \frac{2n-2}{n+1}}+ \dfrac{2n(n-1)}{(n+1)^2}\Phi'(s)\cdot r^{-\frac{2}{n+1}}\\
&=& \left(\dfrac{2n}{n+1}\right)^2\Phi''(s)\cdot s^{\frac{n-1}{n}}+ \dfrac{2n(n-1)}{(n+1)^2}\Phi'(s)\cdot s^{-\frac{1}{n}},\\
\end{array}
\]
and hence   equation \eqref{equ-Sys-origin}  is equivalent to
\[
\begin{array}{llll}
& r^{n-1}=s^{\frac{n^2-1}{2n}}\\
=&
  \varphi\cdot (\varphi')^{n-1}\cdot\varphi''\\
  =&\displaystyle \Phi\cdot\left( \left(\frac{2n}{n+1}\right)^{n-1}\cdot (\Phi')^{n-1} \cdot s^{\frac{n^2-2n+1}{2n}}\right)
  \cdot \left(\left(\dfrac{2n}{n+1}\right)^2\Phi''(s)\cdot s^{\frac{n-1}{n}}+ \dfrac{2n(n-1)}{(n+1)^2}\Phi'(s)\cdot s^{-\frac{1}{n}}\right)\\
  =& \displaystyle
  \left(\frac{2n}{n+1}\right)^{n+1} s^{\frac{n^2-1}{2n}} \Phi\cdot (\Phi')^{n-1}\cdot\Phi''
  +\frac{(2n)^n(n-1)}{(n+1)^{n+1}}
  s^{\frac{n^2-2n-1}{2n}} \Phi\cdot (\Phi')^n,
\end{array}
\]
i.e.,
\begin{equation}\label{equ-Transformed}
\Phi\cdot (\Phi')^{n-1}\cdot\Phi''+\frac{n-1}{2n} s^{-1}\Phi\cdot (\Phi')^n=\left(\dfrac{n+1}{2n}\right)^{n+1},\quad\forall~s>0.
\end{equation}

By the asymptotic results in Theorem \ref{thm-main-1},
\[
\lim_{s\rightarrow\infty}\dfrac{\Phi(s)}{s}=\lim_{s\rightarrow\infty}\Phi'(s)=
\dfrac{n+1}{(2n)^{\frac{n}{n+1}}(n-1)^{\frac{1}{n+1}}}=:c_n.
\]
By equation \eqref{equ-Transformed}, we further have $\Phi''=o(s^{-1})$ at infinity.
Then Lemma \ref{Lem-Refine-1} follows directly from the following refined asymptotics of $\Phi$.
\begin{lemma}\label{Lem-Refine-2}
  Let $n\geq 4$ and $\Phi$ be the monotone non-decreasing solution to \eqref{equ-Transformed} with initial value $\Phi(0)=1$.
  Then
  \[
  \Phi(s)=c_ns+O(s^{k_n})\quad\text{as }s\rightarrow\infty,\quad\text{where}\quad
  k_n:=
  \left\{
  \begin{array}{lll}
    -\frac{n-3}{4}, & \text{if }n=4,5,\\
    -\frac{n-3}{4}+\sqrt{\frac{(n-3)^2}{16}-\frac{n-1}{2n}}, & \text{if }n\geq 6.
  \end{array}
  \right.
  \]
\end{lemma}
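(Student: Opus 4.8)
The plan is to linearize \eqref{equ-Transformed} about the exact linear profile $c_ns$ and to read off the decay rate of the error from the indicial exponents of the linearization, by a standard exponential-asymptotic-stability argument. First I would set $v(s):=\Phi(s)-c_ns$; by Theorem~\ref{thm-main-2} we already know $v=o(s)$, $v'=o(1)$ and, from \eqref{equ-Transformed}, $v''=o(s^{-1})$ as $s\to\infty$. Solving \eqref{equ-Transformed} for $\Phi''$ yields
\[
\Phi''=\frac{\bigl(\tfrac{n+1}{2n}\bigr)^{n+1}}{\Phi\,(\Phi')^{n-1}}-\frac{n-1}{2n}\,s^{-1}\Phi',
\]
whose right-hand side contains no second derivative; since $c_n^{\,n+1}=\tfrac{(n+1)^{n+1}}{(2n)^{n}(n-1)}$ makes $v\equiv0$ an exact solution, Taylor expanding in the small parameters $v/(c_ns)$ and $v'/c_n$ gives the Euler-type equation
\[
s^2v''+\tfrac{n-1}{2}\,sv'+\tfrac{n-1}{2n}\,v=R(s),\qquad R(s)=O\!\bigl(s^{-1}(s^{-2}v^2+(v')^2)\bigr).
\]

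The homogeneous part has indicial equation $\lambda^2+\tfrac{n-3}{2}\lambda+\tfrac{n-1}{2n}=0$, with roots $\lambda_\pm=-\tfrac{n-3}{4}\pm\sqrt{\tfrac{(n-3)^2}{16}-\tfrac{n-1}{2n}}$; the discriminant has the sign of $n^2-7n+8$, so the roots are complex conjugate with real part $-\tfrac{n-3}{4}$ for $n=4,5$ and real and distinct for $n\ge6$, and in every case both have negative real part, with spectral abscissa exactly $k_n$ — and since $n^2-7n+8=0$ has no integer root, there is no repeated-root case for integer $n\ge4$. To exploit this quantitatively I would pass to $w(s):=v(s)/s=\Phi(s)/s-c_n$ and to the logarithmic time $\sigma:=\ln s$: then Theorem~\ref{thm-main-2} gives $w(\sigma)\to0$ and $\dot w(\sigma)=sw'(s)=\Phi'(s)-c_n-w(s)\to0$, and the equation becomes the autonomous second-order system
\[
\ddot w+\tfrac{n+1}{2}\,\dot w+\tfrac{n^2-1}{2n}\,w=\mathcal N(w,\dot w),\qquad |\mathcal N(w,\dot w)|\le C\,(w^2+\dot w^2),
\]
i.e.\ a linear system whose $2\times2$ companion matrix $\tilde A$ has the simple eigenvalues $\lambda_\pm-1$ (so $\tilde A$ is diagonalizable and $\|e^{\tilde At}\|\le Ce^{(k_n-1)t}$ for $t\ge0$), perturbed by a term quadratically small in the small quantity $(w,\dot w)$.

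Then I would close a bootstrap on Duhamel's formula for $W:=(w,\dot w)^{\mathsf T}$,
\[
W(\sigma)=e^{\tilde A(\sigma-\sigma_0)}W(\sigma_0)+\int_{\sigma_0}^{\sigma}e^{\tilde A(\sigma-\tau)}\bigl(0,\mathcal N(\tau)\bigr)^{\mathsf T}\,\mathrm d\tau.
\]
Since $\|W(\sigma)\|\to0$, the nonlinear term satisfies $\|\mathcal N\|\le C\|W\|^2$ with $\|W\|$ arbitrarily small on $[\sigma_0,\infty)$ for $\sigma_0$ large, so Gronwall's inequality applied to this identity gives $\|W(\sigma)\|=O(e^{(k_n-1+\varepsilon)\sigma})$ for any prescribed $\varepsilon>0$. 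Feeding this back, $\|\mathcal N\|=O(e^{(2k_n-2+2\varepsilon)\sigma})$ decays strictly faster than $e^{(k_n-1)\sigma}$ (because $k_n<0$), so in the Duhamel integral the homogeneous decay dominates the forced part and one upgrades to $\|W(\sigma)\|=O(e^{(k_n-1)\sigma})$. Hence $w(s)=O(s^{\,k_n-1})$ and $\Phi(s)-c_ns=s\,w(s)=O(s^{\,k_n})$, which is Lemma~\ref{Lem-Refine-2}; reading this back through $s=r^{2n/(n+1)}$ yields Lemma~\ref{Lem-Refine-1}, hence Theorem~\ref{thm-main-refined}, with $K_n$ as in \eqref{equ-Def-Kn}.

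I expect the genuinely delicate step to be this self-improvement: the information available from Theorem~\ref{thm-main-2} is only of the qualitative form $v=o(s)$, and a naive Gronwall/Duhamel estimate carried out directly for $v=\Phi-c_ns$ merely reproduces $o(s)$. The decisive structural point is that the nonlinearity $R$ carries the extra decaying weight $s^{-1}$ (inherited from the term $s^{-1}\Phi(\Phi')^n$ of \eqref{equ-Transformed}); conjugating to $w=v/s$ converts the error into a genuinely small state and $R$ into a perturbation $\mathcal N=O(\|(w,\dot w)\|^2)$ that is quadratically small in it, which is exactly what allows the stable linear dynamics of the $w$-equation to dictate the asymptotics. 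A secondary point is to verify that the integer values $n\ge4$ always avoid the resonant/repeated-root configuration, which they do.
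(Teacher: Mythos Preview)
Your proposal is correct and follows the same overall route as the paper: set $v=\Phi-c_ns$, linearize \eqref{equ-Transformed} to obtain the Euler equation with indicial polynomial $\lambda^2+\tfrac{n-3}{2}\lambda+\tfrac{n-1}{2n}$, pass to logarithmic time, and use exponential stability of the resulting constant-coefficient system to extract the rate $k_n$. The implementation differs: the paper keeps the perturbation as $o(1)$ variable coefficients in a \emph{linear} equation for $\tilde\Psi=v$ and invokes the asymptotic-integration theorems of Coddington--Levinson and Bodine--Lutz (first to get some exponential decay, then, once the coefficients are seen to be $L^1$, the sharp rate), whereas you recast the problem as an \emph{autonomous} system with quadratic forcing and run a self-contained Duhamel--Gronwall bootstrap. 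Your extra conjugation $w=v/s$ is precisely what converts the forcing into a term that is quadratic in the small state $(w,\dot w)$ and enables this elementary argument; the paper does not need it because in its formulation only smallness of the coefficients, not of the state, is required.

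One minor slip in your bookkeeping: the remainder in the $v$-equation is $R(s)=O\bigl(s\,(s^{-2}v^2+(v')^2)\bigr)$, not $O\bigl(s^{-1}(s^{-2}v^2+(v')^2)\bigr)$ --- you are off by a factor of $s^2$, since after solving for $\Phi''$ and multiplying through by $s^2$ the quadratic term $s^{-1}Q(\alpha,\beta)$ becomes $sQ$. This does not affect the argument, because after dividing by $s$ to pass to the $w$-equation you correctly obtain $\mathcal N=R/s=O(w^2+\dot w^2)$, which is what you actually use.
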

\begin{proof}
Let
\[
\Psi(s):=\Phi(s)-c_ns,\quad\forall~s>0.
\]
Then by \eqref{equ-Transformed} and the asymptotic behavior of $\Phi$ as $s\rightarrow\infty$, we have
\[
\Psi(s)=o(s),\quad\Psi'(s)=o(1)\quad\text{and}\quad\Psi''(s)=\Phi''(s)=o(s^{-1}).
\]
By a direct computation, \eqref{equ-Transformed} implies that $\Psi$ satisfies
\[
(\Psi+c_ns)\cdot (\Psi'+c_n)^{n-1}\cdot \Psi''+\frac{n-1}{2n}(s^{-1}\Psi+c_n)\cdot (\Psi'+c_n)^n=\left(\dfrac{n+1}{2n}\right)^{n+1},\quad\forall~s>0.
\]
By the binomial theorem,
\[
 (\Psi'+c_n)^n= \sum_{k=0}^n\dfrac{n!}{(n-k)!k!}c_n^{n-k}(\Psi')^k,\quad
 (\Psi'+c_n)^{n-1}= \sum_{k=0}^{n-1}\dfrac{(n-1)!}{(n-1-k)!k!}c_n^{n-1-k}(\Psi')^k.
\]
Thus
\[
\begin{array}{lllll}
  \left(\dfrac{n+1}{2n}\right)^{n+1}&=&\displaystyle
  \sum_{k=0}^{n-1}\left(
\left(\Psi\cdot\Psi''+c_ns\Psi''\right)\cdot\frac{(n-1)!}{(n-1-k)!k!}c_n^{n-1-k}\right)(\Psi')^k\\
&&+\displaystyle \sum_{k=0}^n\left(\dfrac{n-1}{2n}(s^{-1}\Psi+c_n)\cdot \dfrac{n!}{(n-k)!k!}c_n^{n-k}
\right)(\Psi')^k,
\end{array}
\]
i.e.,
\begin{equation}\label{equ-temp6}
\left(\dfrac{n+1}{2n}\right)^{n+1}=\sum_{k=0}^nd_{n,k}(\Psi')^k,
\end{equation}
where the coefficients $d_{n,k}$ are
\[
\left\{
\begin{array}{llll}
 \displaystyle \left(\Psi\cdot\Psi''+c_ns\Psi''\right)\cdot\frac{(n-1)!}{(n-1-k)!k!}c_n^{n-1-k}
  +\dfrac{n-1}{2n}(s^{-1}\Psi+c_n)\cdot \dfrac{n!}{(n-k)!k!}c_n^{n-k}, & k\leq n-1,\\
  \displaystyle \dfrac{n-1}{2n}(s^{-1}\Psi+c_n)\cdot \dfrac{n!}{(n-k)!k!}c_n^{n-k}, & k=n.\\
\end{array}
\right.
\]
Especially,
\[
d_{n,0}
= \frac{n-1}{2n}c_n^{n+1}+\frac{n-1}{2n}c_n^ns^{-1}\Psi
+c_n^{n}s\Psi''+\underbrace{c_n^{n-1}(s^{-1}\Psi)\cdot(s\Psi'')}_{\text{higher order term}},
\]
and
\[
d_{n,1}= o(1)+\frac{n-1}{2n}(c_n+o(1))\cdot n c_n^{n-1}\rightarrow \dfrac{n-1}{2}c_n^n\quad\text{as }s\rightarrow\infty.
\]
When $k\geq 2$,  all $d_{n,k}$ are bounded and hence $d_{n,k}(\Psi')^{k}$ belong to higher order error terms. According to the difference of vanishing speed, we may rewrite equation \eqref{equ-temp6} into
\begin{equation}\label{equ-temp7}
\begin{array}{lll}
&\displaystyle  \left(\dfrac{n+1}{2n}\right)^{n+1}\\
=&
\displaystyle\frac{n-1}{2n}c_n^{n+1}+\left(\frac{n-1}{2n}c_n^n+c_n^{n-1}  s\Psi''\right)s^{-1}\Psi
+c_n^{n}s\Psi''\\
&\displaystyle +\left(\dfrac{n-1}{2}c_n^n+(\Psi\cdot\Psi''+c_ns\Psi'')\cdot (n-1)c_n^{n-2}+ \frac{n-1}{2}c_n^{n-1}s^{-1}\Psi\right)\Psi'
\\
&\displaystyle+\sum_{k=2}^{n}
d_{n,k}(\Psi')^k\\
=:&\displaystyle
\frac{n-1}{2n}c_n^{n+1}+\left(\frac{n-1}{2n}c_n^n+c_n^nR_2(s)\right)s^{-1}\Psi
+c_n^{n}s\Psi''+\left(\dfrac{n-1}{2}c_n^n+R_1(s)\right)\Psi',
\end{array}
\end{equation}
where $R_1$ and $R_2$ are smooth functions given by
\[R_1(s):=(\Psi\cdot\Psi''+c_ns\Psi'')\cdot (n-1)c_n^{-2}+\frac{n-1}{2}c_n^{-1}+c_n^{-n}\sum_{k=2}^nd_{n,k}(\Psi')^{k-1},\quad  R_2(s):=c_n^{-1}s\Psi''.
\]
Especially, $R_1$ and $R_2$ satisfies
\begin{equation}\label{equ-temp3}
R_1(s),~R_2(s)=O(s^{-1}|\Psi(s)|)+O(|\Psi'(s)|)+O(s|\Psi''(s)|),\quad\text{as }s\rightarrow\infty.
\end{equation}
From the definition of $c_n$, we have
\[
\frac{n-1}{2n}c_n^{n+1}=\left(\dfrac{n+1}{2n}\right)^{n+1}
\]
and hence \eqref{equ-temp7} implies that  for  large $s$,
\[
c_n^{n}s\Psi''+
\left(\frac{n-1}{2}c_n^n+c_n^nR_1(s)\right)\Psi'+
\left(\frac{n-1}{2n}c_n^n+c_n^nR_2(s)\right)s^{-1}\Psi=0,
\]
i.e.,
\[
s^2\Psi''+\left(\frac{n-1}{2}+R_1(s)\right)s\Psi'+\left(\frac{n-1}{2n}+R_1(s)\right)\Psi=0.
\]

In order to apply asymptotic stability theories, we set
\[
\tilde\Psi(\tilde t):=\Psi(s(\tilde t)),\quad \tilde t:=\ln s.
\]
Then
\[
\tilde\Psi'(\tilde t)=\Psi'(s)\cdot e^{\tilde t}=s\Psi'\quad\text{and}\quad
\tilde\Psi''(\tilde t)=\Psi''(s)\cdot e^{2\tilde t}+\Psi'(s)\cdot e^{\tilde t}=s^2\Psi''+s\Psi',
\]
hence $\tilde\Psi$
satisfies
\[
\left\{
\begin{array}{llll}
\displaystyle\tilde\Psi''+\left(\frac{n-3}{2}+\tilde R_1(\tilde t)\right)\tilde\Psi'+
\left(\frac{n-1}{2n}+\tilde R_2(\tilde t)\right)\tilde\Psi=0,\\
\tilde R_1(\tilde t):=R_1(e^{\tilde t})=o(1),\quad R_2(\tilde t):=R_2(e^{\tilde t})=o(1),\quad\text{as }\tilde t\rightarrow\infty.
\end{array}
\right.
\]
The limiting characteristic polynomial
\[
\lambda^2+\frac{n-3}{2}\lambda+\frac{n-1}{2n}=0
\]
admits two roots
\[
\lambda_{\pm}:=\dfrac{-\dfrac{n-3}{2}\pm\sqrt{\dfrac{(n-3)^2}{4}-\dfrac{2(n-1)}{n}}}{2}.
\]
When $n\geq 4$, the real parts of the characteristic roots are negative.
Hence by the asymptotic stability result as in \cite[Chap 13, Theorem 1.1]{Book-Coddington-Levinson-ODE}, there exists a dimensional constant $k_n'\in [\mathtt{Re}(\lambda_+),0)$  such that
\[
|\tilde\Psi(\tilde t)|\leq Ce^{k_n\tilde t}
\]
for sufficiently large $\tilde t$ and $C$.

Especially by \eqref{equ-temp3} and the asymptotics of $\tilde\Psi$,
\[
\tilde R_1(\tilde t)=
R_1(e^{\tilde t})=O(e^{-\tilde t}e^{k_n'\tilde t})\quad\text{and}\quad
\tilde R_2(\tilde t)=O(e^{-\tilde t}e^{k_n'\tilde t})
\quad\text{as }\tilde t\rightarrow\infty.
\]
Thus $\tilde R_1$ and $\tilde R_2$ belong to $L^1((0,\infty))$.
When $n\geq 6$, we have
\[
\dfrac{(n-3)^2}{4}-\dfrac{2(n-1)}{n}>0
\]
and hence $\mathtt{Re}(\lambda_-)<\mathtt{Re}(\lambda_+)$.
When $n=4$ or $5$, $\dfrac{(n-3)^2}{4}-\dfrac{2(n-1)}{n}<0$ and hence
\[
\mathtt{Re}(\lambda_-)=\mathtt{Re}(\lambda_+)\quad\text{but}\quad \lambda_-\neq\lambda_+.
\]
By the asymptotic behavior results in
 \cite[Sec 8.2. (i) and (iii)]{Book-Bodine-Lutz-AsymptoticIntegration}, we have the desired asymptotic behavior result and  $k_n$ can be chosen explicitly as the real part of $\lambda_+.$
From the definition of $\Psi$ and $\Phi$,
\[
\Psi(s)=O(s^{k_n})\quad\text{and}\quad\Phi(s)=c_ns+o(s^{k_n})\quad\text{as }s\rightarrow\infty.
\]
This finishes the proof of the refined asymptotic behavior.
\end{proof}

\section{Result when $n=1$}\label{sec-Dim1}

Similar to the computations in section \ref{sec-Existence}, we only need to analyze monotone non-decreasing solution to the initial value problem
\begin{equation}\label{equ-Sys-InteForm-Dim1}
\left\{
\begin{array}{lll}
  \displaystyle \varphi\cdot\varphi''=1, & r>0,\\
  \varphi'(0)=0,\\
  \varphi(0)=1.
\end{array}
\right.
\end{equation}

The existence  of solution to \eqref{equ-Sys-InteForm-Dim1} can be proved similarly by constructing $\epsilon$-approximation solution as in Lemma \ref{Lem-LocalExis}. The uniqueness of   solution to \eqref{equ-Sys-InteForm-Dim1} can be proved   by rewriting the equation in \eqref{equ-Sys-InteForm-Dim1} into
\[
\varphi'(r)=\int_0^r\dfrac{1}{\varphi(s)}\mathrm ds=F(r,\varphi),\quad\forall~r>0,
\]
and similar argument  as in the proof of Theorem \ref{thm-main-0}.

It remains to study the asymptotic behavior of $\varphi$ near infinity.
Since $\varphi''>0$ and $\varphi'(0)=0$, we may change of variable by setting $\varphi'=P(\varphi)$ for some function $P$. By a direct computation,
\[
\varphi''=P'(\varphi)\cdot\varphi'=P'\cdot P.
\]
Thus the equation in \eqref{equ-Sys-InteForm-Dim1} can be rewritten into
\[
\left\{
\begin{array}{llll}
(P^2)'=\frac{2}{\varphi},& \varphi\geq 1,\\
P(1)=0.\\
\end{array}
\right.
\]
Consequently,
\begin{equation}\label{equ-temp8}
\varphi'(r)=P(\varphi(r))=2^{\frac{1}{2}}(\ln\varphi(r))^{\frac{1}{2}},\quad\forall~r>0.
\end{equation}
Rewrite equation \eqref{equ-temp8} into
\[
2^{\frac{1}{2}}=\dfrac{\varphi'(r)}{(\ln\varphi(r))^{\frac{1}{2}}},\quad\forall~r>0.
\]
Integrating the aforementioned equation over $(1,r)$, we have
\[
2^{\frac{1}{2}}r-2^{\frac{1}{2}}=\int_1^r\dfrac{\varphi'(s)}{(\ln\varphi(s))^{\frac{1}{2}}}\mathrm ds,\quad\forall~r>1.
\]
Integral by parts and divede both sides by $r$,
\[
2^{\frac{1}{2}}-2^{\frac{1}{2}}r^{-1}=\dfrac{\varphi(r)}{r (\ln\varphi(r))^{\frac{1}{2}}}-\dfrac{\varphi(1)}{r(\ln\varphi(1))^{\frac{1}{2}}}
+\dfrac{1}{2r}\int_1^r\dfrac{\varphi'(s)}{ (\ln\varphi(s))^{\frac{3}{2}} }\mathrm ds.
\]
Sending $r\rightarrow\infty$, by the L'Hospital's rule, we have
\[
\begin{array}{llll}
&\displaystyle\lim_{r\rightarrow\infty }\dfrac{1}{2r}\int_1^r\dfrac{\varphi'(s)}{(\ln\varphi(s))^{\frac{3}{2}} }\mathrm ds\\
=&\displaystyle
\lim_{r\rightarrow\infty}\dfrac{\varphi'(r)}{2(\ln\varphi(r))^{\frac{3}{2}} }\\
=&\displaystyle
\lim_{r\rightarrow\infty}\dfrac{\varphi''(r)\cdot \varphi(r)}{3(\ln\varphi(r))^{\frac{1}{2}}\cdot\varphi'(r)
}\\
=&0,
\end{array}
\]
where we used the fact that
\[
\varphi\cdot\varphi''=1,\quad\lim_{r\rightarrow\infty}
\dfrac{1}{(\ln \varphi(r))^{\frac{1}{2}}}=0\quad\text{and}\quad
\varphi'(r)\geq \varphi'(1)>0,\quad\forall~r\geq 1.
\]
Combining the identities above,
\[
2^{\frac{1}{2}}=\lim_{r\rightarrow\infty}\dfrac{\varphi(r)}{r(\ln\varphi(r))^{\frac{1}{2}}}.
\]
This finishes the proof of Remark \ref{Rem-Dim1Case}.

\small

\bibliographystyle{plain}

\bibliography{C:/Bib/Thesis}

\begin{thebibliography}{10}

\bibitem{Book-Bodine-Lutz-AsymptoticIntegration}
Sigrun Bodine and Donald~A. Lutz.
\newblock {\em Asymptotic integration of differential and difference
  equations}, volume 2129 of {\em Lecture Notes in Mathematics}.
\newblock Springer, Cham, 2015.

\bibitem{Caffarelli-InteriorEstimates-MA}
Luis Caffarelli.
\newblock Interior {$W^{2,p}$} estimates for solutions of the
  {M}onge-{A}mp\`ere equation.
\newblock {\em Annals of Mathematics. Second Series}, 131(1):135--150, 1990.

\bibitem{Calabi}
Eugenio Calabi.
\newblock Improper affine hyperspheres of convex type and a generalization of a
  theorem by {K}. {J}\"{o}rgens.
\newblock {\em Michigan Mathematical Journal}, 5:105--126, 1958.

\bibitem{ChengandYau}
Shiu~Yuen Cheng and Shing-Tung Yau.
\newblock Complete affine hypersurfaces. {I}. {T}he completeness of affine
  metrics.
\newblock {\em Communications on Pure and Applied Mathematics}, 39(6):839--866,
  1986.

\bibitem{Chow-Tsai-GaussCurvFlow}
Bennett Chow and Dong-Ho Tsai.
\newblock Nonhomogeneous {G}auss curvature flows.
\newblock {\em Indiana University Mathematics Journal}, 47(3):965--994, 1998.

\bibitem{Book-Coddington-Levinson-ODE}
Earl~A. Coddington and Norman Levinson.
\newblock {\em Theory of ordinary differential equations}.
\newblock McGraw-Hill Book Company, Inc., New York-Toronto-London, 1955.

\bibitem{Dai-ParboMA-ExterDP-1}
Limei Dai.
\newblock Exterior problems for a parabolic {M}onge-{A}mp\`ere equation.
\newblock {\em Nonlinear Analysis}, 100:99--110, 2014.

\bibitem{DePhilippis-Figalli-OptimalTrans-MA}
Guido De~Philippis and Alessio Figalli.
\newblock The {M}onge-{A}mp\`ere equation and its link to optimal
  transportation.
\newblock {\em American Mathematical Society. Bulletin. New Series},
  51(4):527--580, 2014.

\bibitem{Figalli-Jhaveri-Mooney-Replacement-JianWang}
Alessio Figalli, Yash Jhaveri, and Connor Mooney.
\newblock Nonlinear bounds in {H}\"{o}lder spaces for the {M}onge-{A}mp\`ere
  equation.
\newblock {\em Journal of Functional Analysis}, 270(10):3808--3827, 2016.

\bibitem{Fu-Bernstein-SPL}
Lei Fu.
\newblock An analogue of {B}ernstein's theorem.
\newblock {\em Houston Journal of Mathematics}, 24(3):415--419, 1998.

\bibitem{Gu-Luo-Sun-Yau-ComputGeom-MA}
Xianfeng Gu, Feng Luo, Jian Sun, and Shing-Tung Yau.
\newblock Variational principles for {M}inkowski type problems, discrete
  optimal transport, and discrete {M}onge-{A}mpere equations.
\newblock {\em Asian Journal of Mathematics}, 20(2):383--398, 2016.

\bibitem{Gutierrez-Huang-JCP-ParaboMA}
Cristian~E. Guti\'{e}rrez and Qingbo Huang.
\newblock A generalization of a theorem by {C}alabi to the parabolic
  {M}onge-{A}mp\`ere equation.
\newblock {\em Indiana University Mathematics Journal}, 47(4):1459--1480, 1998.

\bibitem{Jia-Li-AsympMA-halfspace}
Xiaobiao Jia and Dongsheng Li.
\newblock The asymptotic behavior of viscosity solutions of {M}onge-{A}mp\`ere
  equations in half space.
\newblock {\em Nonlinear Analysis}, 206:Paper No. 112229, 23, 2021.

\bibitem{Jian-Wang-ContinuityEstimate-MA}
Huai-Yu Jian and Xu-Jia Wang.
\newblock Continuity estimates for the {M}onge-{A}mp\`ere equation.
\newblock {\em SIAM Journal on Mathematical Analysis}, 39(2):608--626, 2007.

\bibitem{Jorgens}
Konrad J\"{o}rgens.
\newblock \"{U}ber die {L}\"{o}sungen der {D}ifferentialgleichung {$rt-s^2=1$}.
\newblock {\em Mathematische Annalen}, 127:130--134, 1954.

\bibitem{JostandXin}
J\"{u}rgen Jost and Yuan~Long Xin.
\newblock Some aspects of the global geometry of entire space-like
  submanifolds.
\newblock volume~40, pages 233--245. 2001.
\newblock Dedicated to Shiing-Shen Chern on his 90th birthday.

\bibitem{Kantorovitch-OptimalTrans-MA}
Leonid~Vital'evich Kantorovitch.
\newblock On the translocation of masses.
\newblock {\em Management Science. Journal of the Institute of Management
  Science. Application and Theory Series}, 5:1--4, 1958.

\bibitem{Karatzas-ParboMA-StochasticThy}
Ioannis Karatzas.
\newblock Adaptive control of a diffusion to a goal and a parabolic
  {M}onge-{A}mp\`ere-type equation.
\newblock {\em Asian Journal of Mathematics}, 1(2):295--313, 1997.

\bibitem{Krylov-ParaboMA-Operator}
Nicolai~V. Krylov.
\newblock Sequences of convex functions, and estimates of the maximum of the
  solution of a parabolic equation.
\newblock {\em Akademija Nauk SSSR. Sibirskoe Otdelenie. Sibirski\u{\i}
  Matemati\v{c}eski\u{\i} \v{Z}urnal}, 17(2):290--303, 478, 1976.

\bibitem{Book-Lakshm-Wen-Zhang-ThyofODE-UnbounDelay}
Vangipuram Lakshmikantham, Li~Zhi Wen, and Bing~Gen Zhang.
\newblock {\em Theory of differential equations with unbounded delay}, volume
  298 of {\em Mathematics and its Applications}.
\newblock Kluwer Academic Publishers Group, Dordrecht, 1994.

\bibitem{Book-Li-Xu-Simon-Jia-MA}
An-Min Li, Ruiwei Xu, Udo Simon, and Fang Jia.
\newblock {\em Affine {B}ernstein problems and {M}onge-{A}mp\`ere equations}.
\newblock World Scientific Publishing Co. Pte. Ltd., Hackensack, NJ, 2010.

\bibitem{Li-Sheng-Wang-MinkowskiProb-MA}
Qi-Rui Li, Weimin Sheng, and Xu-Jia Wang.
\newblock Flow by {G}auss curvature to the {A}leksandrov and dual {M}inkowski
  problems.
\newblock {\em Journal of the European Mathematical Society (JEMS)},
  22(3):893--923, 2020.

\bibitem{Liu-Bao-2021-Expansion-LagMeanC}
Zixiao Liu and Jiguang Bao.
\newblock Asymptotic expansion at infinity of solutions of {M}onge-{A}mp\`ere
  type equations.
\newblock {\em Nonlinear Analysis}, 212:Paper No. 112450, 17, 2021.

\bibitem{Liu-Wang-Bao-ExisEntireSol-LagMeanCur}
Zixiao Liu, Cong Wang, and Jiguang Bao.
\newblock Existence of entire solutions to the lagrangian mean curvature
  equations in supercritical phase.
\newblock {\em arXiv. 2302.06987}, 2023.

\bibitem{Loftin-Tsui-AncientAffinNormalFlow}
John Loftin and Mao-Pei Tsui.
\newblock Ancient solutions of the affine normal flow.
\newblock {\em Journal of Differential Geometry}, 78(1):113--162, 2008.

\bibitem{Nirenberg-WeylnMinkovskiProb}
Louis Nirenberg.
\newblock The {W}eyl and {M}inkowski problems in differential geometry in the
  large.
\newblock {\em Communications on Pure and Applied Mathematics}, 6:337--394,
  1953.

\bibitem{Pogorelov}
Aleksei~Vasil'evich Pogorelov.
\newblock On the improper convex affine hyperspheres.
\newblock {\em Geometriae Dedicata}, 1(1):33--46, 1972.

\bibitem{Schnurer-Smoczyk-GaussCurvFlow}
Oliver~C. Schn\"{u}rer and Knut Smoczyk.
\newblock Neumann and second boundary value problems for {H}essian and {G}auss
  curvature flows.
\newblock {\em Ann. Inst. H. Poincar\'{e} C Anal. Non Lin\'{e}aire},
  20(6):1043--1073, 2003.

\bibitem{Tso-ParaboMA-Deforming}
Kaising Tso.
\newblock Deforming a hypersurface by its {G}auss-{K}ronecker curvature.
\newblock {\em Communications on Pure and Applied Mathematics}, 38(6):867--882,
  1985.

\bibitem{Wang-Bao-Asymparabolic-MA}
Bo~Wang and Jiguang Bao.
\newblock Asymptotic behavior on a kind of parabolic {M}onge-{A}mp\`ere
  equation.
\newblock {\em Journal of Differential Equations}, 259(1):344--370, 2015.

\bibitem{Chong-Rongli-Bao-SecondBoundary-SPL}
Chong Wang, Rongli Huang, and Jiguang Bao.
\newblock On the second boundary value problem for {L}agrangian mean curvature
  equation.
\newblock {\em Calculus of Variations and Partial Differential Equations},
  62(3):Paper No. 74, 2023.

\bibitem{Wang-Bao-ExtireKHessian}
Cong Wang and Jiguang Bao.
\newblock Liouville property and existence of entire solutions of {H}essian
  equations.
\newblock {\em Nonlinear Analysis}, 223:Paper No. 113020, 18, 2022.

\bibitem{Warren-Calibrations-MA}
Micah Warren.
\newblock Calibrations associated to {M}onge-{A}mp\`ere equations.
\newblock {\em Transactions of the American Mathematical Society},
  362(8):3947--3962, 2010.

\bibitem{Xiong-Bao-JCP-ParaboMA}
Jingang Xiong and Jiguang Bao.
\newblock On {J}\"{o}rgens, {C}alabi, and {P}ogorelov type theorem and isolated
  singularities of parabolic {M}onge-{A}mp\`ere equations.
\newblock {\em Journal of Differential Equations}, 250(1):367--385, 2011.

\bibitem{Zhang-Bao-Calabi-paraboMA-periodic}
Wei Zhang and Jiguang Bao.
\newblock A {C}alabi theorem for solutions to the parabolic {M}onge-{A}mp\`ere
  equation with periodic data.
\newblock {\em Annales de l'Institut Henri Poincar\'{e}. Analyse Non
  Lin\'{e}aire}, 35(5):1143--1173, 2018.

\bibitem{Zhang-Bao-Wang-JCP-ParabolicMA}
Wei Zhang, Jiguang Bao, and Bo~Wang.
\newblock An extension of {J}\"{o}rgens-{C}alabi-{P}ogorelov theorem to
  parabolic {M}onge-{A}mp\`ere equation.
\newblock {\em Calculus of Variations and Partial Differential Equations},
  57(3):Paper No. 90, 36, 2018.

\end{thebibliography}

\bigskip

\noindent N. An \& J. Bao

\medskip

\noindent  School of Mathematical Sciences, Beijing Normal University\\
Laboratory of Mathematics and Complex Systems, Ministry of Education\\
Beijing 100875, China \\[1mm]
Email: \textsf{AnNingAN@mail.bnu.edu.cn, jgbao@bnu.edu.cn}

\medskip

\noindent Z. Liu (Corresponding Author)

\medskip

\noindent  Institute of Applied Mathematics, Department of Mathematics\\
Facility of Science, Beijing University of Technology\\
Beijing 100124, China \\[1mm]
Email:\textsf{liuzixiao@bjut.edu.cn}

\end{document}